 \newtheorem{thm}{Theorem}[section]
 \newtheorem{cor}[thm]{Corollary}
 \newtheorem{lem}[thm]{Lemma}
 \newtheorem{prop}[thm]{Proposition}
    \theoremstyle{definition}
  \newtheorem{example}[thm]{Example}
 \theoremstyle{remark}
 \newtheorem{rem}[thm]{Remark}
\numberwithin{equation}{section}
\begin{document}

\title[A construction  of an iterated Ore extension]{A construction  of  an iterated Ore extension}

\author[No-Ho Myung]{No-Ho Myung}
\address{Department of Mathematics\\
         Chungnam  National University\\
          99 Daehak-ro,   Yuseong-gu, Daejeon 34134, Korea}
\email{nhmyung@cnu.ac.kr}             

\author[Sei-Qwon Oh]{Sei-Qwon Oh}
\address{                            
         Department of Mathematics\\
         Chungnam National University  \\
         99 Daehak-ro,   Yuseong-gu, Daejeon 34134, Korea}
\email{sqoh@cnu.ac.kr}             


\subjclass[2010]{17B63, 16S36}

\keywords{Poisson polynomial algebra (Poisson Ore extension), Skew polynomial algebra (Ore extension), Quantization, Semiclassical limit, Deformation}



\begin{abstract}
Let $B$ be a Poisson algebra $\Bbb C[x_1,\ldots, x_k]$ with Poisson bracket such that $$\{x_j,x_i\}=c_{ji}x_ix_j+p_{ji}$$ for all $j>i$, where $c_{ji}\in\Bbb C$ and $p_{ji}\in\Bbb C[x_1,\ldots,x_i]$. Here we obtain an iterated skew polynomial algebra such that its semiclassical limit is equal to $B$  and the results are illustrated by  examples.
\end{abstract}

\maketitle


\section{Introduction}
Recall the star product in \cite[1.1]{Kon}.
Let $R=(R,\{-,-\})$ be a Poisson algebra  and let $Q$ be a quantization of $R$ with a star product $*$.
Then $Q$ is a ${\Bbb C}[[\hbar]]$-algebra $R[[\hbar]]$ such that for any $a,b\in R\subset Q=R[[\hbar]]$,
$$a*b=ab+B_1(a,b)\hbar+B_2(a,b)\hbar^2+\ldots$$ subject to
\begin{equation}\label{PBRACKET}
\{a,b\}=\hbar^{-1}(a*b-b*a)|_{\hbar=0},
\end{equation}
where $B_i:R\times R\longrightarrow R$ are bilinear products. In general, the star product is as follows:
for any $f=\sum_{n\geq0}f_n\hbar^n, g=\sum_{n\geq0}g_n\hbar^n\in Q$
$$(\sum_{n\geq0}f_n\hbar^n)*(\sum_{n\geq0}g_n\hbar^n)=\sum_{k,l\geq0}f_kg_l\hbar^{k+l}+\sum_{k,l\geq0,m\geq1}B_m(f_k,g_l)\hbar^{k+l+m}.
 $$
 It is well-known that we can recover the Poisson algebra $R=Q/\hbar Q$ with Poisson bracket (\ref{PBRACKET}) from $Q$ since $\hbar$ is  a nonzero, nonunit, non-zero-divisor and central element such that $Q/\hbar Q$ is commutative.
  But it seems that the star product in $Q$ is complicate and $Q$ is difficult to  understand at an algebraic point of view since it is too big.
 For instance, if $\lambda$ is a nonzero element of $\Bbb C$ then $\hbar-\lambda$ is a unit in $Q$ and thus $Q/(\hbar-\lambda)Q$ is trivial. Hence it seems that  we need an appropriate $\Bbb F$-subalgebra $A$ of $Q$ such that $A$ contains all generators of $Q$,  $\hbar\in A$ and $A$ is understandable   at an algebraic point of view, where $\Bbb F$ is a subring of $\Bbb C[[\hbar]]$.

Suppose that $A$ is an algebra and let $\hbar\in A$ be a nonzero, nonunit, non-zero-divisor and central element such that
$A/\hbar A$ is commutative. Then $A/\hbar A$ is a nontrivial commutative algebra as well as a Poisson algebra with the Poisson bracket
\begin{equation}\label{PBRACKET2}
\{\overline{a}, \overline{b}\}=\overline{\hbar^{-1}(ab-ba)}
\end{equation}
for $\overline{a}, \overline{b}\in A/\hbar A$. Note that (\ref{PBRACKET}) is equal to (\ref{PBRACKET2}). Further, if  there is an element  $0\neq\lambda\in\Bbb C$ such that $\hbar-\lambda$ is a  nonunit in $A$ then we obtain a  nontrivial algebra $A/(\hbar-\lambda)A$ with the multiplication induced by that of $A$.
The  Poisson algebra $A/\hbar A$ is called a {\it semiclassical limit} of $A$ and the  nontrivial algebra $A/(\hbar-\lambda)A$ is called a {\it deformation} of $A$ or $A/\hbar A$  in \cite[2.1]{Good4}.
The element $\hbar\in A$ inducing the Poisson algebra $A/\hbar A$ is called a {\it regular element} of $A$. Namely, by a regular element  $\hbar\in A$ we mean a nonzero, nonunit, non-zero-divisor and central element of $A$ such that $A/\hbar A$ is commutative.
(An anonymous referee suggested to use the terminology \lq regular element\rq \ while several papers for semiclassical limits were written even though there are many concepts for \lq regular element\rq \ as in \cite{McRo} and \cite{GoWa2}. We hope that a nice terminology for this concept is given.)
In  general, let $A$ be an $\Bbb F$-algebra  generated by $x_1,\ldots, x_n$ with relations $f_1,\ldots, f_r$ and let $\lambda\in \Bbb C$, where $\Bbb F$ is a subring of $\Bbb C[[\hbar]]$ containing $\Bbb C[\hbar]$ and $f_i$ are elements of the free $\Bbb F$-algebra on the set $\{x_1,\ldots, x_n\}$. Assume that $g|_{\hbar=\lambda}$, $f_i|_{\hbar=\lambda}$ make sense mathematically for all $g\in A$ and $i=1,\ldots,r$.
Denote by $A_\lambda$ the $\Bbb C$-algebra generated by $x_1,\ldots, x_n$ with the relations $f_1|_{\hbar=\lambda},\ldots, f_r|_{\hbar=\lambda}$ and let $\varphi$ be the evaluation map from $A$ onto $A_\lambda$ defined by $g\mapsto g|_{\hbar=\lambda}$. Then $\varphi$ is a $\Bbb C$-algebra epimorphism and $A/\ker\varphi\cong A_\lambda$. In particular, if
$\ker \varphi\neq A$ then $A_\lambda$ is nontrivial  and the multiplication of $A_\lambda$ is induced by that of $A$. We still call the nontrivial algebra $A_\lambda$ a {\it deformation} of $A/\hbar A$.

Let $B_k$ be a Poisson $\Bbb C$-algebra $\Bbb C[x_1,\ldots, x_k]$ with Poisson bracket such that for all $j>i$,
\begin{equation}\label{PBRACKET3}
\{x_j,x_i\}=c_{ji}x_ix_j+p_{ji},
\end{equation}
  where $c_{ji}\in\Bbb C$ and $p_{ji}\in\Bbb C[x_1,\ldots,x_i]$.
A main aim of the article is to give how to construct an $\Bbb F$-algebra which is presented by an iterated skew polynomial algebra such that its semiclassical limit is equal to the given Poisson algebra $B_k$.

Let $t$ be an indeterminate and let $\Bbb C[[t-1]]$ be the ring of formal power series over $\Bbb C$ at $t-1$. Namely,
$$\Bbb C[[t-1]]=\left\{\sum_{i=0}^\infty b_i(t-1)^i\ |\ b_i\in\Bbb C\right\}.$$
Note that $\Bbb C[[t-1]]$ is an integral domain, that $\Bbb C[t]\subseteq\Bbb C[[t-1]]$ and that a nonzero element $\sum_{i=0}^\infty b_i(t-1)^i$ is a unit in $\Bbb C[[t-1]]$ if and only if $b_0\neq0$. We assume throughout the article that $\Bbb F$ is a subring  of $\Bbb C[[t-1]]$ containing $\Bbb C[t]$, namely
$$\Bbb C[t]\subseteq \Bbb F\subseteq\Bbb C[[t-1]].$$
Let
 $$A_{k-1}=\Bbb F[x_1][x_2;\beta_2,\nu_2]\ldots[x_{k-1};\beta_{k-1},\nu_{k-1}]$$
 be an iterated skew polynomial $\Bbb F$-algebra and let $\beta_k,\nu_k$ be $\Bbb F$-linear maps from  $A_{k-1}$ into itself.
 In this article, we find  necessary and sufficient conditions for $\beta_k$ and $\nu_k$ such that there exists a skew polynomial algebra $A_k=A_{k-1}[x_k;\beta_k,\nu_k]$ under suitable conditions. (See Lemma~\ref{NECE} and  Theorem~\ref{MAIN}.) Hence, using induction on $k$  repeatedly, we can get iterated skew polynomial algebras from the result. Next we observe that $t-1$ is a regular element of $A_k$ and   find a condition  such that the Poisson algebra $B_k=\Bbb C[x_1,\ldots, x_k]$ with Poisson bracket (\ref{PBRACKET3}) is Poisson isomorphic to the  semiclassical limit $A_k/(t-1) A_k$. (See Corollary~\ref{AD}  and \cite[\S 2]{ChOh3}.) Finally we give examples illustrating the results.

Recall several basic terminologies.
(1) Given an $\Bbb F$-endomorphism $\beta$ on an $\Bbb F$-algebra $R$, an $\Bbb F$-linear map $\nu$ is said to be a {\it left $\beta$-derivation} on $R$ if
$\nu(ab)=\beta(a)\nu(b)+\nu(a)b$ for all $a,b\in R$. For such a pair $(\beta,\nu)$, we denote by $R[z;\beta,\nu]$ the skew polynomial $\Bbb F$-algebra. Refer to \cite[Chapter 2]{GoWa2} for details of a skew polynomial algebra.

(2) A commutative $\Bbb C$-algebra $R$ is said to be a {\it Poisson algebra} if there exists a bilinear product $\{-,-\}$ on $R$, called a {\it Poisson bracket}, such that $(R, \{-,-\})$ is a Lie algebra with $\{ab,c\}=a\{b,c\}+\{a,c\}b$ for all $a,b,c\in R$.
A derivation $\alpha$ on $R$ is said to be a {\it Poisson derivation} if $\alpha(\{a,b\})=\{\alpha(a),b\}+\{a,\alpha(b)\}$ for all $a,b\in R$. Let $\alpha$ be a Poisson derivation on $R$ and let $\delta$ be a derivation on $R$ such that
\begin{equation}\label{SKEW}
\delta(\{a,b\})-\{\delta(a),b\}-\{a,\delta(b)\}=\alpha(a)\delta(b)-\delta(a)\alpha(b)
\end{equation}
for all $a,b\in R$. By \cite[1.1]{Oh8}, the commutative polynomial $\Bbb C$-algebra $R[z]$ is a Poisson  algebra  with Poisson bracket $\{z,a\}=\alpha(a)z+\delta(a)$ for all $a\in R$. Such a Poisson polynomial algebra
$R[z]$ is denoted by $R[z;\alpha,\delta]_p$ in order to distinguish it from skew polynomial algebras. If $\alpha=0$ then
we write $R[z;\delta]_p$ for $R[z;0,\delta]_p$ and if $\delta=0$ then we write $R[z;\alpha]_p$ for $R[z;\alpha,0]_p$.


\section{A construction of an iterated skew polynomial algebra}
Set $A_1=\Bbb F[x_1]$ and let $A_{n}$, $n>1$, be an iterated skew polynomial $\Bbb F$-algebra
$$A_{n}=\Bbb F[x_1][x_2;\beta_2,\nu_2]\ldots[x_{n};\beta_{n},\nu_{n}].$$
By monomials in $A_n$ we mean  finite products of $x_i$'s together with the unity 1. A monomial $X$ is said to be {\it standard} if $X$ is of the form
$$X=1\text{ or }X=x_{i_1}x_{i_2}\cdots x_{i_k}\qquad (1\leq i_1\leq i_2\leq\ldots\leq i_k\leq n).$$
Note that the set of all standard monomials of $A_n$ forms an $\Bbb F$-basis.

Let $\beta$ and $\nu$ be $\Bbb F$-linear maps from an $\Bbb F$-algebra $R$ into itself.
The following lemma is well known, e.g. see \cite[p.177]{Jac}.
\begin{lem}\label{hhh}
The following conditions are equivalent:\\
(1) The $\Bbb F$-linear map $\phi:R\rightarrow M_2(R)$ by
$$\phi(r)=\left(\begin{matrix}\beta(r)&\nu(r)\\ 0&r \end{matrix}\right)$$
for all $r\in R$, is an $\Bbb F$-algebra homomorphism\\
(2) $\beta$ and $\nu$ are an endomorphism and a left $\beta$-derivation on $R$ respectively.
\end{lem}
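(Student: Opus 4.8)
The plan is to prove the equivalence by direct computation, checking what it means for $\phi$ to respect multiplication and the unit. First I would observe that $\phi$ is automatically $\Bbb F$-linear (since $\beta$ and $\nu$ are), so being an $\Bbb F$-algebra homomorphism amounts exactly to the two conditions $\phi(1)=I_2$ and $\phi(ab)=\phi(a)\phi(b)$ for all $a,b\in R$. The unit condition $\phi(1)=\left(\begin{smallmatrix}\beta(1)&\nu(1)\\0&1\end{smallmatrix}\right)=I_2$ is equivalent to $\beta(1)=1$ and $\nu(1)=0$; note $\beta(1)=1$ is part of $\beta$ being an (algebra) endomorphism, and $\nu(1)=0$ follows automatically once the product condition holds (apply it with $a=b=1$), so this causes no real trouble either way.

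Next I would compute the product $\phi(a)\phi(b)$ explicitly:
\begin{equation*}
\phi(a)\phi(b)=\left(\begin{matrix}\beta(a)&\nu(a)\\ 0&a\end{matrix}\right)\left(\begin{matrix}\beta(b)&\nu(b)\\ 0&b\end{matrix}\right)=\left(\begin{matrix}\beta(a)\beta(b)&\beta(a)\nu(b)+\nu(a)b\\ 0&ab\end{matrix}\right),
\end{equation*}
while by definition
\begin{equation*}
\phi(ab)=\left(\begin{matrix}\beta(ab)&\nu(ab)\\ 0&ab\end{matrix}\right).
\end{equation*}
Comparing entrywise, the bottom row always agrees, so $\phi(ab)=\phi(a)\phi(b)$ holds for all $a,b$ if and only if $\beta(ab)=\beta(a)\beta(b)$ and $\nu(ab)=\beta(a)\nu(b)+\nu(a)b$ for all $a,b\in R$. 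The first identity (together with $\beta(1)=1$, recovered above) says exactly that $\beta$ is an $\Bbb F$-algebra endomorphism, and the second says exactly that $\nu$ is a left $\beta$-derivation.

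Assembling these observations gives both directions: if (1) holds, reading off the matrix entries yields the multiplicativity of $\beta$ and the left $\beta$-derivation identity for $\nu$, and $\phi(1)=I_2$ gives $\beta(1)=1$, so (2) holds; conversely, if (2) holds then the entrywise computation shows $\phi(ab)=\phi(a)\phi(b)$, and $\beta(1)=1$ together with $\nu(1)=\beta(1)\nu(1)+\nu(1)\cdot 1$ (which forces $\nu(1)=0$) gives $\phi(1)=I_2$, so (1) holds. There is no genuine obstacle here — the lemma is essentially a bookkeeping exercise with $2\times 2$ matrices — but the one point worth stating carefully is the handling of the unit/normalization conditions $\beta(1)=1$ and $\nu(1)=0$, making explicit that $\nu(1)=0$ is a consequence rather than an extra hypothesis.
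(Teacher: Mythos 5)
Your computation is correct and is exactly the standard argument; the paper itself gives no proof of this lemma, simply citing Jacobson, and the entrywise comparison of $\phi(ab)$ with $\phi(a)\phi(b)$ you carry out is the proof that reference contains. Your care with the unit conditions ($\beta(1)=1$ as part of being an algebra endomorphism, $\nu(1)=0$ forced by the derivation identity) is a correct and welcome extra detail.
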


In an iterated skew polynomial $\Bbb F$-algebra
$$A_{k-1}=\Bbb F[x_1][x_2;\beta_2,\nu_2]\ldots[x_{k-1};\beta_{k-1},\nu_{k-1}],$$
assume that $\beta_j,\nu_j$ $(j=2,\ldots, k-1)$ satisfy
\begin{eqnarray}
&\beta_j(x_i)=a_{ji}x_i, \ a_{ji}\in\Bbb F\ \ &( 1\leq i<j< k)\label{hom}\\
&\nu_j(x_i)=u_{ji}\in A_{i}\ \ \ \ \ \ \ \ &(1\leq i<j< k).\label{der0}
\end{eqnarray}

We are going to construct a skew polynomial $\Bbb F$-algebra
$$A_k=A_{k-1}[x_k;\beta_k,\nu_k]=\Bbb F[x_1][x_2;\beta_2,\nu_2]\ldots[x_{k};\beta_{k},\nu_{k}]$$ such that $\beta_k$, $\nu_k$ satisfy the following  conditions
\begin{eqnarray}
&\beta_k(1)=1,\ \beta_k(x_i)=a_{ki}x_i,\  a_{ki}\in\Bbb F\ \ &(1\leq i\leq k-1),\label{AU}\\
&\nu_k(1)=0,  \ \nu_k(x_{i})=u_{ki}\in A_i \ \ \ \ \ \ \ \ \  &(1\leq i\leq k-1).\label{?}
\end{eqnarray}

The following statement gives us  necessary conditions for the existence of the skew polynomial $\Bbb F$-algebra $A_k=A_{k-1}[x_k;\beta_k,\nu_k]$ over $A_{k-1}$.

\begin{lem}\label{NECE}
If there exists a skew polynomial $\Bbb F$-algebra
$A_k=A_{k-1}[x_k;\beta_k,\nu_k]$ such that $\beta_k$, $\nu_k$ are subject to (\ref{AU}), (\ref{?}) then
$\beta_k$, $\nu_k$  satisfy the following conditions
\begin{eqnarray}
&\beta_k(u_{ji})=a_{kj}a_{ki}u_{ji}\qquad\qquad\qquad     &(1\leq i<j<k),\label{ENDO1}\\
&a_{kj}x_ju_{ki}+u_{kj}x_i=a_{ji}u_{ki}x_j+a_{ki}a_{ji}x_iu_{kj}+\nu_k(u_{ji}) &(1\leq i<j<k).\label{ENDO2}
\end{eqnarray}
\end{lem}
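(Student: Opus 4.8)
The plan is to read the hypothesis through Lemma~\ref{hhh}: the existence of the skew polynomial $\Bbb F$-algebra $A_k=A_{k-1}[x_k;\beta_k,\nu_k]$ means exactly that $\beta_k$ is an $\Bbb F$-algebra endomorphism of $A_{k-1}$ and that $\nu_k$ is a left $\beta_k$-derivation of $A_{k-1}$. I would then feed the only commutation relation satisfied by the generators of $A_{k-1}$, namely
\[
x_jx_i=\beta_j(x_i)x_j+\nu_j(x_i)=a_{ji}x_ix_j+u_{ji}\qquad(1\le i<j<k),
\]
which holds by (\ref{hom}) and (\ref{der0}), into $\beta_k$ and into $\nu_k$, and compare the outcome with the values that (\ref{AU}), (\ref{?}) impose on $x_i,x_j$.

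\emph{Deriving (\ref{ENDO1}).} Apply the homomorphism $\beta_k$ to the displayed relation. Using (\ref{AU}), the left side equals $\beta_k(x_j)\beta_k(x_i)=a_{kj}a_{ki}\,x_jx_i$, while applying $\beta_k$ to $a_{ji}x_ix_j+u_{ji}$ gives $a_{ji}a_{ki}a_{kj}\,x_ix_j+\beta_k(u_{ji})$. Rewriting $x_jx_i$ on the left once more as $a_{ji}x_ix_j+u_{ji}$ and cancelling the common multiple of $x_ix_j$ leaves $\beta_k(u_{ji})=a_{kj}a_{ki}u_{ji}$.

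\emph{Deriving (\ref{ENDO2}).} Apply the left $\beta_k$-derivation $\nu_k$ to the same relation. By the rule $\nu_k(ab)=\beta_k(a)\nu_k(b)+\nu_k(a)b$ together with (\ref{AU}), (\ref{?}), the left side is $\nu_k(x_jx_i)=a_{kj}x_ju_{ki}+u_{kj}x_i$, and applying $\nu_k$ to $a_{ji}x_ix_j+u_{ji}$ gives $a_{ji}\nu_k(x_ix_j)+\nu_k(u_{ji})=a_{ji}a_{ki}x_iu_{kj}+a_{ji}u_{ki}x_j+\nu_k(u_{ji})$; equating the two expressions is exactly (\ref{ENDO2}).

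These computations are short and purely mechanical; the only things to watch are that the scalars $a_{ki},a_{kj},a_{ji}$ lie in the central subring $\Bbb F$ and so pass freely through monomials, and that the content of each step is precisely the consistency of evaluating $\beta_k$ (resp. $\nu_k$) on $x_jx_i$ \emph{factorwise} versus on the same element \emph{rewritten} via the $A_{k-1}$-relation. I do not expect any real obstacle in this lemma — it merely records the constraints that any admissible pair $(\beta_k,\nu_k)$ must obey; the genuine work of showing that (\ref{ENDO1}), (\ref{ENDO2}) are also \emph{sufficient} for $A_k$ to exist belongs to Theorem~\ref{MAIN}.
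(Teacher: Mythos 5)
Your proposal is correct and follows essentially the same route as the paper: it applies the endomorphism $\beta_k$ and the left $\beta_k$-derivation $\nu_k$ to the relation $x_jx_i=a_{ji}x_ix_j+u_{ji}$, evaluates each side factorwise versus via the rewritten form, and cancels to obtain (\ref{ENDO1}) and (\ref{ENDO2}). The only cosmetic difference is that you cite Lemma~\ref{hhh} where the paper simply uses the defining properties of a skew polynomial extension directly; the computations are identical.
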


\begin{proof}
Let $1\leq i<j\leq k-1$.
Since $\beta_k$ is an $\Bbb F$-algebra endomorphism, we have that
$$\beta_k(x_jx_i)= \beta_k(\beta_j(x_i)x_j+\nu_j(x_i))=a_{kj}a_{ki}a_{ji}x_ix_j+\beta_k(u_{ji})$$
and
$$
\begin{aligned}
\beta_k(x_jx_i)&=\beta_k(x_j)\beta_k(x_i)=a_{kj}a_{ki}x_jx_i\\
&=a_{kj}a_{ki}(\beta_j(x_i)x_j+\nu_j(x_i))
=a_{kj}a_{ki}a_{ji}x_ix_j+a_{kj}a_{ki}u_{ji}
\end{aligned}
$$
by (\ref{hom})-(\ref{?}).
Hence we get (\ref{ENDO1}).

Similarly, since $\nu_k$ is a left $\beta_k$-derivation, we have that
$$\nu_k(x_jx_i)= \nu_k(x_j)x_i+\beta_k(x_j)\nu_k(x_i)=u_{kj}x_i+a_{kj}x_ju_{ki}$$
and
$$\begin{aligned}
\nu_k(x_jx_i)&=\nu_k(\beta_j(x_i)x_j+\nu_j(x_i))=\nu_k(a_{ji}x_ix_j+u_{ji})\\
&=a_{ji}(\nu_k(x_i)x_j+\beta_k(x_i)\nu_k(x_j))+\nu_k(u_{ji})\\
&=a_{ji}u_{ki}x_j + a_{ki}a_{ji}x_iu_{kj} +\nu_k(u_{ji})
\end{aligned}$$
by (\ref{hom})-(\ref{?}).
Hence we get (\ref{ENDO2}).
\end{proof}

\begin{lem}
For $1\leq i<j\leq k-1$, let all $\beta_j,\nu_j, a_{ji}, u_{ji}$ satisfy (\ref{hom}), (\ref{der0}). Let
$\beta_k, \nu_k$ be $\Bbb F$-linear maps from $A_{k-1}$ into itself subject to the conditions (\ref{AU}) and  (\ref{?}).
If $\beta_k$ and $\nu_k$ satisfy (\ref{ENDO1}) and (\ref{ENDO2}) then  the following conditions hold.
\begin{equation}\label{endo}
\beta_k(x_j)\beta_k(x_i)=\beta_k\beta_j(x_i)\beta_k(x_j)+\beta_k\nu_j(x_i),
\end{equation}
\begin{equation}\label{der1}
\beta_k(x_j)\nu_k(x_i)+\nu_k(x_j)x_i=\beta_k\beta_j(x_i)\nu_k(x_j)+\nu_k\beta_j(x_i)x_j+\nu_k\nu_j(x_i)
\end{equation}
\end{lem}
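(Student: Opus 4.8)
The plan is to observe that, once one expands the two sides of (\ref{endo}) and of (\ref{der1}) using the defining formulas (\ref{AU}), (\ref{?}) together with the commutation relation
$$x_jx_i=\beta_j(x_i)x_j+\nu_j(x_i)=a_{ji}x_ix_j+u_{ji}$$
valid in $A_{k-1}$, the identity (\ref{endo}) turns into a verbatim rewriting of (\ref{ENDO1}) and the identity (\ref{der1}) turns into a verbatim rewriting of (\ref{ENDO2}). Conceptually this is because (\ref{endo}) just asserts $\beta_k(x_j)\beta_k(x_i)=\beta_k(x_jx_i)$ and (\ref{der1}) just asserts the left $\beta_k$-Leibniz rule $\nu_k(x_jx_i)=\beta_k(x_j)\nu_k(x_i)+\nu_k(x_j)x_i$, both evaluated on the ordered pair $x_j,x_i$ with $i<j$; so there is essentially no computation beyond unwinding the definitions.

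To make (\ref{endo}) explicit I would use (\ref{AU}) to write its left-hand side as $a_{kj}a_{ki}x_jx_i$, then replace $x_jx_i$ by $a_{ji}x_ix_j+u_{ji}$ to get $a_{kj}a_{ki}a_{ji}x_ix_j+a_{kj}a_{ki}u_{ji}$. On the right-hand side, $\beta_k\beta_j(x_i)=\beta_k(a_{ji}x_i)=a_{ji}a_{ki}x_i$ and $\beta_k\nu_j(x_i)=\beta_k(u_{ji})$, so it equals $a_{ji}a_{ki}a_{kj}x_ix_j+\beta_k(u_{ji})$. The two expressions coincide exactly when $\beta_k(u_{ji})=a_{kj}a_{ki}u_{ji}$, which is (\ref{ENDO1}).

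For (\ref{der1}) I would argue identically. By (\ref{AU}) and (\ref{?}) its left-hand side is $a_{kj}x_ju_{ki}+u_{kj}x_i$; and since $\beta_k\beta_j(x_i)\nu_k(x_j)=a_{ji}a_{ki}x_iu_{kj}$, $\nu_k\beta_j(x_i)x_j=a_{ji}u_{ki}x_j$ and $\nu_k\nu_j(x_i)=\nu_k(u_{ji})$, its right-hand side is $a_{ji}u_{ki}x_j+a_{ki}a_{ji}x_iu_{kj}+\nu_k(u_{ji})$. Equating the two sides is precisely (\ref{ENDO2}), which holds by hypothesis. The only point requiring care throughout is the bookkeeping of domains — that $u_{ji}\in A_i\subseteq A_{k-1}$ and that $\beta_j(x_i)=a_{ji}x_i$ is again a scalar multiple of a generator, so that (\ref{AU}), (\ref{?}) may legitimately be applied to the elements appearing — and no real obstacle arises; the lemma is a preparatory step toward the main theorem.
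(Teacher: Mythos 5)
Your proposal is correct and is precisely the computation the paper intends: the paper's own proof merely asserts that (\ref{endo}) and (\ref{der1}) follow from (\ref{ENDO1}) and (\ref{ENDO2}) via (\ref{hom})--(\ref{?}), and your explicit expansion of both sides (using the relation $x_jx_i=a_{ji}x_ix_j+u_{ji}$ in $A_{k-1}$, the $\Bbb F$-linearity of $\beta_k,\nu_k$, and the centrality of the scalars $a_{kj},a_{ki}\in\Bbb F$) is exactly that verification, mirroring the computation already carried out in the proof of Lemma~\ref{NECE}.
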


\begin{proof}
Since $A_{k-1}$ is an iterated skew polynomial $\Bbb F$-algebra, the equations (\ref{endo}) and (\ref{der1}) follow from  (\ref{ENDO1}) and (\ref{ENDO2}), respectively, by (\ref{hom})-(\ref{?}).
\end{proof}

In the following theorem, we see that  (\ref{ENDO1}) and (\ref{ENDO2}) are  sufficient conditions for the existence of the skew polynomial $\Bbb F$-algebra $A_k=A_{k-1}[x_k;\beta_k,\nu_k]$ over $A_{k-1}$.

\begin{thm}\label{MAIN}
For $1\leq i<j\leq k-1$, let all $\beta_j,\nu_j, a_{ji}, u_{ji}$ satisfy (\ref{hom}), (\ref{der0}).
Given  $\Bbb F$-linear maps  $\beta_k, \nu_k$    from $A_{k-1}$ into itself subject to
(\ref{AU}), (\ref{?}), if $\beta_k$ and $\nu_k$ satisfy the  conditions (\ref{ENDO1}), (\ref{ENDO2}) then  there exists an iterated skew polynomial $\Bbb F$-algebra
$$A_k=A_{k-1}[x_k;\beta_k,\nu_k]=\Bbb F[x_1][x_2;\beta_2,\nu_2]\ldots[x_{k};\beta_{k},\nu_{k}].$$
\end{thm}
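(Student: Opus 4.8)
The plan is to build $A_k$ directly as a free left $A_{k-1}$-module on the powers $x_k^n$ and to define a multiplication on it, using the matrix criterion of Lemma~\ref{hhh} to reduce the whole problem to checking that $\beta_k$ is an algebra endomorphism and $\nu_k$ a left $\beta_k$-derivation of $A_{k-1}$. Concretely, let $F$ be the free $\Bbb F$-algebra on $x_1,\dots,x_k$ and let $I\subseteq F$ be the two-sided ideal generated by the relations defining $A_{k-1}$ together with $x_kx_i-\beta_k(x_i)x_k-\nu_k(x_i)$ for $1\le i\le k-1$; set $A_k=F/I$. Everything reduces to showing that $A_k$ is \emph{free} as a left $A_{k-1}$-module with basis $\{1,x_k,x_k^2,\dots\}$, equivalently that the standard monomials of $A_k$ (in the sense of the paragraph preceding Lemma~\ref{hhh}, now allowing the extra variable $x_k$) are $\Bbb F$-linearly independent; the spanning is automatic from the relations.

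The standard way I would establish this is via the diamond lemma (Bergman) or, more in the spirit of the paper, by exhibiting a faithful action. For the latter: since $\beta_k,\nu_k$ are assumed $\Bbb F$-linear on $A_{k-1}$, Lemma~\ref{hhh} tells me that the map $\phi\colon A_{k-1}\to M_2(A_{k-1})$ sending $r\mapsto\left(\begin{smallmatrix}\beta_k(r)&\nu_k(r)\\0&r\end{smallmatrix}\right)$ is an algebra homomorphism \emph{provided} $\beta_k$ is an endomorphism and $\nu_k$ a left $\beta_k$-derivation. So the real content is: the hypotheses (\ref{ENDO1}) and (\ref{ENDO2}), which only constrain $\beta_k,\nu_k$ on the \emph{generators} $x_i$ and the \emph{base-level} elements $u_{ji}$, actually force $\beta_k$ to respect \emph{all} the relations of $A_{k-1}$ and hence to extend to an endomorphism, and similarly for $\nu_k$. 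This is exactly what the unnamed lemma just before Theorem~\ref{MAIN} does at the level of the single relations $x_jx_i=\beta_j(x_i)x_j+\nu_j(x_i)$: equations (\ref{endo}) and (\ref{der1}) say precisely that $\beta_k$ and $\nu_k$ are compatible with those defining relations. So my first step is to upgrade that lemma from the generating relations to all of $A_{k-1}$.

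Thus the steps, in order, are: \textbf{(i)} Show $\beta_k$ is an $\Bbb F$-algebra endomorphism of $A_{k-1}$. Since $A_{k-1}=\Bbb F[x_1][x_2;\beta_2,\nu_2]\cdots[x_{k-1};\beta_{k-1},\nu_{k-1}]$ is itself an iterated Ore extension, an $\Bbb F$-linear map on it is an endomorphism as soon as it is multiplicative on a generating set subject to checking it on each Ore relation; equation (\ref{endo}) is that check, and one extends to arbitrary products $x_{i_1}\cdots x_{i_m}$ by induction on length, pushing indices into standard order and invoking (\ref{ENDO1}) to handle the $u_{ji}$ that appear. \textbf{(ii)} Show $\nu_k$ is a left $\beta_k$-derivation of $A_{k-1}$, by the same induction, now using (\ref{der1}) and (\ref{ENDO2}); the Leibniz-type identity for products follows formally once it holds on pairs of generators, because $\beta_k$ is already known to be an endomorphism. \textbf{(iii)} With $\beta_k$ an endomorphism and $\nu_k$ a left $\beta_k$-derivation, the skew polynomial ring $A_{k-1}[x_k;\beta_k,\nu_k]$ exists by the standard construction (\cite[Chapter~2]{GoWa2}): it is $A_{k-1}$-free on $\{x_k^n\}$ with multiplication determined by $x_kr=\beta_k(r)x_k+\nu_k(r)$, associativity being guaranteed precisely by Lemma~\ref{hhh}. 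By construction this ring is the iterated skew polynomial algebra claimed.

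The main obstacle is step \textbf{(i)}, more precisely the bookkeeping in the induction that shows (\ref{endo}) and (\ref{ENDO1}) together suffice to make $\beta_k$ multiplicative on \emph{all} standard monomials, not just adjacent pairs. The delicate point is that when one commutes $x_j$ past $x_i$ inside a long monomial, the correction term $\nu_j(x_i)=u_{ji}$ lands in $A_i\subseteq A_{k-1}$, which is again a sum of standard monomials of smaller degree, so one needs the inductive hypothesis to apply to those — this is why (\ref{ENDO1}), $\beta_k(u_{ji})=a_{kj}a_{ki}u_{ji}$, is exactly the compatibility one needs, and why $u_{ji}$ is required to lie in $A_i$ rather than all of $A_{j-1}$. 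One must be careful that the induction is set up on a well-founded order (length of monomial, or degree in the $x_j$ with $j$ large) so that the recursion on the $u_{ji}$ terminates. Once that ordering is fixed, the verification is routine but needs to be written out; everything after step (i) is formal.
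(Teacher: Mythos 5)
Your proposal is correct and pivots on the same key tool as the paper, namely Lemma~\ref{hhh}: everything reduces to showing that $\beta_k$ is an $\Bbb F$-algebra endomorphism and $\nu_k$ a left $\beta_k$-derivation of $A_{k-1}$, after which the Ore extension exists by the standard construction. The difference is in how that reduction is executed. You propose to prove multiplicativity of $\beta_k$ and the Leibniz identity for $\nu_k$ directly, by induction on monomial length with rewriting into standard order, and you correctly flag the bookkeeping of the correction terms $u_{ji}$ as the main obstacle. The paper sidesteps that obstacle entirely: it defines $\beta_k,\nu_k$ on the standard basis by the explicit formulas (\ref{beta}), (\ref{nu}), then defines an algebra homomorphism $f$ from the \emph{free} algebra $\Bbb F\langle S_{k-1}\rangle$ into $M_2(A_{k-1})$ by sending $x_i$ to the $2\times 2$ matrix of Lemma~\ref{hhh}. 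Multiplicativity of $f$ is then automatic; the only induction needed is the easy one showing $f(X)=\left(\begin{smallmatrix}\beta_k(X)&\nu_k(X)\\ 0&X\end{smallmatrix}\right)$ for \emph{standard} $X$ (where $X=Yx_{i_r}$ with $Y$ again standard, so no rewriting ever occurs), and what remains is to check that the defining relations $x_jx_i-\beta_j(x_i)x_j-\nu_j(x_i)$ lie in $\ker f$, which is exactly (\ref{endo}), (\ref{der1}) together with (\ref{FU}). Thus the paper uses Lemma~\ref{hhh} in the direction (1)$\Rightarrow$(2), reading off the endomorphism and derivation properties from the induced map $A_{k-1}\to M_2(A_{k-1})$, whereas you use it in the direction (2)$\Rightarrow$(1) after establishing (2) by hand; both work, but the paper's packaging buys freedom from the rewriting induction you identify as delicate. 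One small point for either write-up: conditions (\ref{AU}) and (\ref{?}) only pin down $\beta_k,\nu_k$ on the generators, so one must actually \emph{define} them on the whole standard basis (as the paper does in (\ref{beta}), (\ref{nu})) before the argument can start.
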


\begin{proof}
It is enough to show that there exist an $\Bbb F$-algebra endomorphism $\beta_k$ on $A_{k-1}$ and a left $\beta_k$-derivation $\nu_k$ subject to  the conditions (\ref{AU}) and (\ref{?}).
Note that the set of all standard monomials forms an $\Bbb F$-basis of $A_{k-1}$. For any standard monomials $x_{i_1}\cdots x_{i_r}\in A_{k-1}$,  define  $\Bbb F$-linear maps $\beta_k$ and $\nu_k$ from $A_{k-1}$ into itself  by
\begin{eqnarray}
\beta_k(1)=1, \ &\beta_k(x_{i_1}\cdots x_{i_r})&=(a_{ki_1}x_{i_1})\cdots(a_{ki_r}x_{i_r}),\label{beta}\\
\nu_k(1)=0,\ &\nu_k(x_{i_1}\cdots x_{i_r})&=\sum_{\ell=1}^{r}(a_{ki_1}x_{i_1})\cdots (a_{ki_{\ell-1}}x_{i_{\ell-1}})u_{ki_\ell}(x_{i_{\ell+1}}\cdots x_{i_r}).  \label{nu}
\end{eqnarray}
Observe that  these $\Bbb F$-linear maps $\beta_k$ and $\nu_k$  satisfy (\ref{AU}) and (\ref{?}).
We will  show that the map $\beta_k$ defined by (\ref{beta}) is an $\Bbb F$-algebra endomorphism and the map $\nu_k$ defined by (\ref{nu}) is a left $\beta_k$-derivation by using Lemma~\ref{hhh}.

Let $\Bbb F\langle S_{k-1}\rangle$ be the free $\Bbb F$-algebra on the set $S_{k-1}=\{x_1,\ldots,x_{k-1}\}$. Define an $\Bbb F$-algebra homomorphism $f:\Bbb F\langle S_{k-1}\rangle\rightarrow M_2(A_{k-1})$ by
$$f(x_i)=\left(\begin{matrix}\beta_k(x_i)&\nu_k(x_i)\\ 0&x_i \end{matrix}\right)\qquad(1\leq i<k).$$
Let us show that
\begin{equation}\label{FU}
f(\nu_j(x_i))=\left(\begin{matrix}\beta_k\nu_j(x_i)&\nu_k\nu_j(x_i)\\ 0&\nu_j(x_i) \end{matrix}\right)
\end{equation}
for $1\leq i<j<k$.
For any standard monomial $X=x_{i_1}\cdots x_{i_r}$ in $A_{k-1}$, by (\ref{beta}) and (\ref{nu}),
$$\begin{aligned}
\nu_k(X)&=\sum_{\ell=1}^r \beta_k(x_{i_1}\cdots x_{i_{\ell-1}})\nu_k(x_{i_\ell})(x_{i_{\ell+1}}\cdots x_{i_r})\\
&=\sum_{\ell=1}^{r-1} \beta_k(x_{i_1}\cdots x_{i_{\ell-1}})\nu_k(x_{i_\ell})(x_{i_{\ell+1}}\cdots x_{i_r}) + \beta_k(x_{i_1}\cdots x_{i_{r-1}})\nu_k(x_{i_r})\\
&=\nu_k(x_{i_1}\cdots x_{i_{r-1}})x_{i_r}+\beta_k(x_{i_1}\cdots x_{i_{r-1}})\nu_k(x_{i_r}).
\end{aligned}$$
In particular, if $Xx_j$ is standard (thus $i_r\leq j$) then
\begin{equation}\label{NU}
\nu_k(Xx_j)=\beta_k(X)\nu_k(x_j)+\nu_k(X)x_j.
\end{equation}
Let us verify first that
\begin{equation}\label{FU1}
f(X)=\left(\begin{matrix}\beta_k(X)&\nu_k(X)\\ 0&X \end{matrix}\right)
\end{equation}
for any  standard monomial $X=x_{i_1}\cdots x_{i_r}$ in $A_{k-1}$ of length $r$. We proceed by induction on $r$. If $r=1$ then (\ref{FU1}) is true trivially. Assume that $r>1$ and that (\ref{FU1}) holds for any standard monomial of length $<r$.
Set $Y=x_{i_1}\cdots x_{i_{r-1}}$. Then $Y$ is a standard monomial of length $r-1$ and $X=Yx_{i_r}$. Thus (\ref{FU1}) holds as follows:
$$\begin{aligned}
f(X)&=f(Yx_{i_r})=f(Y)f(x_{i_r})&&\\
&=\left(\begin{matrix}\beta_k(Y)&\nu_k(Y)\\ 0&Y \end{matrix}\right)\left(\begin{matrix}\beta_k(x_{i_r})&\nu_k(x_{i_r})\\ 0&x_{i_r} \end{matrix}\right)&&(\text{by induction hypothesis})\\
&=\left(\begin{matrix}\beta_k(Y)\beta_k(x_{i_r})&\beta_k(Y)\nu_k(x_{i_r})+\nu_k(Y)x_{i_r}\\ 0&Yx_{i_r} \end{matrix}\right)&&\\
&=\left(\begin{matrix}\beta_k(X)&\nu_k(X)\\ 0&X \end{matrix}\right).&&(\text{by }(\ref{beta}),(\ref{NU}))
\end{aligned}$$

Let $\nu_j(x_i)=\sum_{\ell} b_{\ell} X_\ell$, where all $b_\ell\in\Bbb F$ and $X_\ell$ are standard monomials of $A_{i}$.
Since $f$ is an $\Bbb F$-algebra homomorphism, we have
$$\begin{aligned}
f(\nu_j(x_i))&=\sum_\ell b_\ell f(X_\ell)&&\\
&=\sum_\ell b_\ell\left(\begin{matrix}\beta_k(X_\ell)&\nu_k(X_\ell) \\
0&X_\ell\end{matrix}\right)&&(\text{by }(\ref{FU1}))\\
&=\left(\begin{matrix}\beta_k(\sum_\ell b_\ell X_\ell)&\nu_k(\sum_\ell b_\ell X_\ell) \\
0&\sum_\ell b_\ell X_\ell\end{matrix}\right)&&\\
&=\left(\begin{matrix}\beta_k\nu_j(x_i)&\nu_k\nu_j(x_i)\\ 0&\nu_j(x_i) \end{matrix}\right).&&
\end{aligned}$$
Thus (\ref{FU}) holds.

Note that $A_{k-1}$ is an $\Bbb F$-algebra generated by $x_1,\ldots, x_{k-1}$ with relations
$$x_jx_i-\beta_j(x_i)x_j-\nu_j(x_i)\qquad (1\leq i<j<k).$$
Namely, $A_{k-1}$ is isomorphic to the $\Bbb F$-algebra $\Bbb F\langle S_{k-1}\rangle/I$, where $I$ is the ideal generated by    $$x_jx_i-\beta_j(x_i)x_j-\nu_j(x_i)\ \ \ \ (1\leq i<j<k).$$
Since $f$ is an $\Bbb F$-algebra homomorphism, it is easy to check that $I\subseteq\text{ker}f$ by (\ref{endo}), (\ref{der1}) and (\ref{FU}).
Hence there exists an $\Bbb F$-algebra homomorphism $\phi:A_{k-1}\rightarrow M_2(A_{k-1})$ such that
$$\phi(x_i)=\left(\begin{matrix}\beta_k(x_i)&\nu_k(x_i)\\ 0&x_i \end{matrix}\right)$$
for $1\leq i<k$.
By Lemma~\ref{hhh}, $\beta_k$ is an $\Bbb F$-algebra endomorphism on $A_{k-1}$ and $\nu_k$ is a left $\beta_k$-derivation on $A_{k-1}$ as claimed.
\end{proof}

\begin{rem}\label{REM}
Retain the notations of Theorem~\ref{MAIN}.

(1) If $a_{ki}\neq0$ for all $1\leq i<k$ then $\beta_k$ is a monomorphism.

(2)  If $u_{ji}=0$ for all $1\leq i<j\leq k$ then (\ref{ENDO1}) and (\ref{ENDO2}) hold trivially.

(3) If $A_{k-1}$ is commutative and $a_{ki}=1$ for all $1\leq i\leq k-1$ then (\ref{ENDO1}) and (\ref{ENDO2}) hold.
\end{rem}

\begin{proof}
(1) Note that  $\beta_i$, $\nu_i$ are $\Bbb F$-linear for all $i=1,\ldots,k$. Let $f=\sum_i a_iX_i\in A_{k-1}$, where $a_i\in\Bbb F$ and $X_i$ are standard monomials for all $i$, and suppose that $\beta_k(f)=0$. Then
$\beta_k(X_i)=b_iX_i$ for some $0\neq b_i\in\Bbb F$ by (\ref{beta}) and thus $$0=\beta_k(f)=\sum_i a_ib_iX_i.$$ It follows that all
$a_i=0$ since the standard monomials of $A_k$ form an $\Bbb F$-basis. Thus $f=0$.

(2) Trivial.

(3) Since $A_{k-1}$ is commutative, $u_{ji}=0$  and $a_{ji}=1$ for all $1\leq i<j\leq k-1$ and thus (\ref{ENDO1}) and (\ref{ENDO2}) hold.
\end{proof}

\begin{thm}\label{LIMIT}
Let $A_k=\Bbb F[x_1][x_2;\beta_2,\nu_2]\ldots[x_k;\beta_k,\nu_k]$ be the iterated skew polynomial $\Bbb F$-algebra in Theorem~\ref{MAIN}. Suppose that $\Bbb F/(t-1)\Bbb F$ is isomorphic to $\Bbb C$, that $t-1$ is a  nonunit and non-zero-divisor in $A_k$ and that
\begin{equation}\label{CON1}
a_{ji}-1\in (t-1)\Bbb F, \ \ \nu_j(x_i)\in (t-1)A_k
\end{equation}
for all  $1\leq i<j\leq k$. Then $t-1$ is a regular element of $A_k$ and  the semiclassical limit $\overline{A}_k=A_k/( t-1)A_k$ is Poisson isomorphic to an iterated Poisson polynomial $\Bbb C$-algebra
$$\Bbb C[x_1][x_2;\alpha_2,\delta_2]_p\ldots[x_k;\alpha_k,\delta_k]_p,$$
where
\begin{equation}\label{REL1}
\alpha_j(x_i)=\left(\frac{da_{ji}}{dt}|_{t=1}\right)x_i, \ \ \delta_j(x_i)=\frac{d\nu_j(x_i)}{dt}|_{t=1}
\end{equation}
for all $1\leq i<j\leq k$. (Derivatives are formal derivatives of power series in $t-1$.)
\end{thm}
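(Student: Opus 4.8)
The plan is to exploit the fact that, modulo $(t-1)$, each defining relation of the skew polynomial algebra degenerates to a commutative relation, so that $\overline{A}_k = A_k/(t-1)A_k$ is the commutative polynomial ring $\Bbb C[x_1,\ldots,x_k]$; the Poisson bracket then comes from dividing the commutator $ab-ba$ by $t-1$, exactly as in \eqref{PBRACKET2}. First I would check that $t-1$ is a regular element of $A_k$: it is central and nonzero since $\Bbb F$ is a domain sitting centrally, it is a nonzero-divisor and nonunit by hypothesis, and $A_k/(t-1)A_k$ is commutative because in $\overline{A}_k$ the relation $x_jx_i = \beta_j(x_i)x_j + \nu_j(x_i)$ becomes $x_jx_i = \overline{a_{ji}}\,x_ix_j + \overline{\nu_j(x_i)} = x_ix_j$, using $a_{ji}-1\in(t-1)\Bbb F$ and $\nu_j(x_i)\in(t-1)A_k$ from \eqref{CON1} together with $\Bbb F/(t-1)\Bbb F\cong\Bbb C$. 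Since $A_k$ is a free $\Bbb F$-module on the standard monomials, $\overline{A}_k$ is free over $\Bbb C$ on the same set, hence $\overline{A}_k\cong\Bbb C[x_1,\ldots,x_k]$ as a $\Bbb C$-algebra.

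Next I would identify the Poisson bracket on $\overline{A}_k$ on the generators. From $x_jx_i - x_ix_j = (a_{ji}-1)x_ix_j + \nu_j(x_i)$ in $A_k$ and $a_{ji}-1 = \tfrac{da_{ji}}{dt}\big|_{t=1}(t-1) + O((t-1)^2)$, $\nu_j(x_i) = \tfrac{d\nu_j(x_i)}{dt}\big|_{t=1}(t-1) + O((t-1)^2)$ (these Taylor expansions make sense since $\Bbb F\subseteq\Bbb C[[t-1]]$ and the element $\nu_j(x_i)\in(t-1)A_k$ has coefficients in $\Bbb F$), dividing by $t-1$ and reducing mod $(t-1)$ gives
\begin{equation*}
\{x_j,x_i\} = \overline{(t-1)^{-1}(x_jx_i - x_ix_j)} = \Big(\tfrac{da_{ji}}{dt}\big|_{t=1}\Big)x_ix_j + \tfrac{d\nu_j(x_i)}{dt}\big|_{t=1} = \alpha_j(x_i)x_j + \delta_j(x_i)\big|_{\text{specialized}},
\end{equation*}
which, with the definitions \eqref{REL1}, is precisely the bracket $\{x_j,x_i\} = \alpha_j(x_i)x_j + \delta_j(x_i)$ of the iterated Poisson polynomial algebra $\Bbb C[x_1][x_2;\alpha_2,\delta_2]_p\cdots[x_k;\alpha_k,\delta_k]_p$. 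Here one reads $\alpha_j,\delta_j$ off \eqref{REL1}: $\alpha_j$ multiplies $x_i$ by the scalar $\tfrac{da_{ji}}{dt}|_{t=1}$, and $\delta_j(x_i)$ is the reduction mod $(t-1)$ of $\tfrac{d\nu_j(x_i)}{dt}|_{t=1}$, an element of $\overline{A_i}=\Bbb C[x_1,\ldots,x_i]$.

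It then remains to check that this data genuinely defines an iterated Poisson polynomial algebra, i.e.\ that at each stage $\alpha_j$ is a Poisson derivation of $\Bbb C[x_1,\ldots,x_{j-1}]$ and $\delta_j$ a derivation satisfying the compatibility \eqref{SKEW}, and that the resulting Poisson structure agrees with the one induced from $A_k$. The cleanest route is to invoke \cite[\S2]{ChOh3} (referenced in the introduction precisely for this purpose): the general principle there is that the semiclassical limit of an Ore extension $R[z;\beta,\nu]$ whose $\beta,\nu$ degenerate appropriately mod $(t-1)$ is the Poisson-Ore extension $\overline{R}[z;\alpha,\delta]_p$ with $\alpha = \tfrac{d\beta}{dt}|_{t=1}$, $\delta = \tfrac{d\nu}{dt}|_{t=1}$ on $\overline{R}$; applying this inductively to $A_k = A_{k-1}[x_k;\beta_k,\nu_k]$ and using that \eqref{CON1} holds at every level $j$, one gets both that $(\alpha_j,\delta_j)$ is an admissible pair and that $\overline{A}_k \cong \overline{A}_{k-1}[x_k;\alpha_k,\delta_k]_p$ as Poisson algebras. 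The main obstacle is the bookkeeping in the induction: one must verify that the formal derivative $\tfrac{d}{dt}|_{t=1}$ interacts correctly with the already-chosen isomorphism $\overline{A}_{k-1}\cong\Bbb C[x_1,\ldots,x_{k-1}]$, in particular that $\tfrac{d\nu_k(x_i)}{dt}|_{t=1}$ lands in $A_i$ and reduces to an element of $\Bbb C[x_1,\ldots,x_i]$ compatible with the lower brackets — this is where hypotheses \eqref{hom}, \eqref{der0}, \eqref{AU}, \eqref{?} and \eqref{CON1} are all used, and it is essentially the content of the cited \cite[\S2]{ChOh3} specialized to the present setup.
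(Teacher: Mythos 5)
Your argument is correct and follows essentially the same route as the paper: show commutativity of $\overline{A}_k$ from $x_jx_i-x_ix_j=(a_{ji}-1)x_ix_j+\nu_j(x_i)\in(t-1)A_k$, then divide the commutator by $t-1$ and reduce modulo $t-1$ to read off the brackets \eqref{REL1}. The paper's proof is just this computation followed by ``the result follows,'' with the identification of such a bracket structure as an iterated Poisson polynomial algebra deferred to the subsequent lemma, so your extra remarks on freeness over $\Bbb F$ and the appeal to \cite[\S 2]{ChOh3} only make explicit what the paper leaves implicit.
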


\begin{proof}
Note that $A_k$ is generated by $x_1,\ldots, x_k$ and that $t-1\in\Bbb F\subset A_k$. Hence $t-1$ is a nonzero central  element of $A_k$.
Since
\begin{equation}\label{COM}
x_jx_i-x_ix_j=\beta_j(x_i)x_j+\nu_j(x_i)-x_ix_j=(a_{ji}-1)x_ix_j+\nu_j(x_i)\in (t-1)A_k
\end{equation}
by (\ref{CON1}), $\overline{A}_k$ is a commutative $\Bbb C$-algebra and thus $t-1$ is a regular element of $A_k$. Moreover we have
$$\begin{aligned}
\{\overline{x}_j,\overline{x}_i\}&=\overline{(t-1)^{-1}(x_jx_i-x_ix_j)}&&\\
&=\overline{\left(\frac{a_{ji}-1}{t-1}\right)x_ix_j+\left(\frac{\nu_j(x_i)}{t-1}\right)}&&(\text{by } (\ref{COM}))\\
&=\left(\frac{da_{ji}}{dt}|_{t=1}\right)\overline{x}_i\overline{x}_j+\overline{\left(\frac{d\nu_j(x_i)}{dt}|_{t=1}\right)}&&(\text{by } (\ref{CON1}))
\end{aligned}$$
for all $1\leq i<j\leq k$. Hence the result follows.
\end{proof}

For each positive integer $k$, we will write $B_k$ for the commutative polynomial ring $\Bbb C[x_1,\ldots,x_k]$.

\begin{lem}
Let $B_k=\Bbb C[x_1,\ldots,x_k]$ be a Poisson algebra satisfying the following condition: for any $1\leq i<j\leq k$,
\begin{equation}\label{MOC1}
\{x_j,x_i\}=c_{ji}x_ix_j+p_{ji}
\end{equation}
for some $c_{ji}\in\Bbb C, p_{ji}\in B_i.$ Then $B_k$ is an iterated Poisson polynomial algebra of the form
\begin{equation}\label{MOC2}
B_k=\Bbb C[x_1][x_2;\alpha_2,\delta_2]_p\ldots[x_k;\alpha_k,\delta_k]_p,
\end{equation}
where $$\alpha_j(x_i)=c_{ji}x_i,\ \ \ \  \delta_j(x_i)=p_{ji}.$$

Conversely, if $B_k$ is an iterated Poisson polynomial algebra of the form (\ref{MOC2}) then $B_k$ is a Poisson algebra satisfying
the condition (\ref{MOC1}).
\end{lem}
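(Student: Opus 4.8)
The plan is to prove both directions by reducing to the known correspondence between Poisson polynomial extensions and Poisson brackets, iterating on $k$. For the forward direction, I would proceed by induction on $k$. The base case $k=1$ is trivial since $\Bbb C[x_1]$ carries the zero Poisson bracket. For the inductive step, assume $B_{k-1}=\Bbb C[x_1,\ldots,x_{k-1}]$ with the restricted bracket is the iterated Poisson polynomial algebra $\Bbb C[x_1][x_2;\alpha_2,\delta_2]_p\cdots[x_{k-1};\alpha_{k-1},\delta_{k-1}]_p$. I then define $\alpha_k,\delta_k$ on $B_{k-1}$ by the rules $\alpha_k(x_i)=c_{ki}x_i$ and $\delta_k(x_i)=p_{ki}$, extended as a derivation to all of $B_{k-1}$; this extension is well defined and unique since $B_{k-1}$ is a polynomial algebra freely generated by $x_1,\ldots,x_{k-1}$.

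The key steps are then: (i) verify that $\alpha_k$ is a Poisson derivation on $B_{k-1}$, i.e.\ that $\alpha_k(\{a,b\})=\{\alpha_k(a),b\}+\{a,\alpha_k(b)\}$; (ii) verify that $\delta_k$ satisfies the compatibility identity (\ref{SKEW}), namely $\delta_k(\{a,b\})-\{\delta_k(a),b\}-\{a,\delta_k(b)\}=\alpha_k(a)\delta_k(b)-\delta_k(a)\alpha_k(b)$ for all $a,b\in B_{k-1}$; and (iii) conclude from \cite[1.1]{Oh8} (quoted in the excerpt) that $B_{k-1}[x_k;\alpha_k,\delta_k]_p$ is a Poisson polynomial algebra whose bracket $\{x_k,a\}=\alpha_k(a)x_k+\delta_k(a)$ agrees with the given bracket on $B_k$. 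For (iii), once (i) and (ii) hold, the Poisson polynomial algebra structure exists, and I only need to check that the two Poisson brackets on $B_k=\Bbb C[x_1,\ldots,x_k]$ coincide; since both are Poisson brackets and both satisfy the Leibniz rule, it suffices that they agree on the generators $\{x_j,x_i\}$ for all pairs, which is true by construction (for $j<k$ it is the inductive hypothesis, for $j=k$ it is $\{x_k,x_i\}=\alpha_k(x_i)x_k+\delta_k(x_i)=c_{ki}x_ix_k+p_{ki}$).

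The main obstacle is steps (i) and (ii): since both are derivation identities (bilinear in $a,b$ and satisfying a Leibniz-type rule in each argument), it suffices to verify each identity only when $a=x_i$ and $b=x_j$ range over generators of $B_{k-1}$ — a standard reduction, but one that must be stated carefully because the right-hand side of (\ref{SKEW}) is itself a biderivation-like expression. Concretely, one checks that the map $(a,b)\mapsto$ LHS $-$ RHS of (\ref{SKEW}) is a biderivation, hence vanishes identically once it vanishes on pairs of generators. Evaluating on $x_i,x_j$ with $i<j\le k-1$ reduces, after expanding $\{x_j,x_i\}=c_{ji}x_ix_j+p_{ji}$ with $p_{ji}\in B_i$ and applying $\alpha_k,\delta_k$, to a polynomial identity in $\Bbb C[x_1,\ldots,x_{k-1}]$; this is where the hypothesis $p_{ji}\in B_i$ (i.e.\ $p_{ji}$ involves only $x_1,\ldots,x_i$) is used, guaranteeing the triangular compatibility. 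I expect the verification to be a moderately lengthy but entirely mechanical computation, analogous to the semiclassical computations already carried out in the proof of Theorem~\ref{LIMIT}.

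For the converse, the argument is essentially the reverse and is short: if $B_k$ is given in the form (\ref{MOC2}), then by the definition of the Poisson polynomial extension $[x_j;\alpha_j,\delta_j]_p$ we have $\{x_j,a\}=\alpha_j(a)x_j+\delta_j(a)$ for $a\in B_{j-1}$; taking $a=x_i$ with $i<j$ gives $\{x_j,x_i\}=\alpha_j(x_i)x_j+\delta_j(x_i)=c_{ji}x_ix_j+p_{ji}$ with $p_{ji}=\delta_j(x_i)\in B_i$ (since $\delta_j$ is a derivation of $B_{j-1}=\Bbb C[x_1,\ldots,x_{j-1}]$, in general $\delta_j(x_i)\in B_{j-1}$; the hypothesis in (\ref{MOC2}) that $\delta_j(x_i)=p_{ji}\in B_i$ is what yields the desired form), which is exactly (\ref{MOC1}). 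Thus $B_k$ satisfies (\ref{MOC1}) and no further work is needed.
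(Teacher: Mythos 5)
Your overall architecture --- induct on $k$, define $\alpha_k,\delta_k$ on $B_{k-1}$ by their values on the generators, and invoke \cite[1.1]{Oh8} --- is the same as the paper's, but you use that cited result in the opposite direction, and that is where a gap appears. The paper observes that $B_k=B_{k-1}[x_k]$ is \emph{already} a Poisson algebra in which $B_{k-1}$ is a Poisson subalgebra (this is where $p_{ji}\in B_i$ enters) and $\{x_k,a\}=\alpha_k(a)x_k+\delta_k(a)$ for all $a\in B_{k-1}$; the ``only if'' half of \cite[1.1]{Oh8} then hands over your conditions (i) and (ii) with no computation. You instead propose to verify (i) and (ii) directly and apply the ``if'' half. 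The biderivation reduction to generators is correct, but your account of why the resulting generator-level identities hold is not: you attribute them to the triangularity hypothesis $p_{ji}\in B_i$ and call the rest ``entirely mechanical.'' They do not follow from triangularity. For example, the Poisson-derivation identity for $\alpha_k$ at the pair $(x_j,x_i)$ reduces to $\sum_{\ell}c_{k\ell}x_\ell\,\partial p_{ji}/\partial x_\ell=(c_{ki}+c_{kj})p_{ji}$, which fails for a generic triangular bracket of the form (\ref{MOC1}): take $k=3$ with $\{x_2,x_1\}=x_1$, $\{x_3,x_1\}=0$, $\{x_3,x_2\}=x_2x_3$, where the left side is $0$ and the right side is $x_1$ (and, consistently, the Jacobi identity fails for this bracket).

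The missing ingredient is the Jacobi identity of the given bracket on $B_k$: the identities (i) and (ii) evaluated at $(x_j,x_i)$ are exactly the $x_k^1$- and $x_k^0$-components of $\{x_k,\{x_j,x_i\}\}+\{x_j,\{x_i,x_k\}\}+\{x_i,\{x_k,x_j\}\}=0$, once one writes $\{x_k,a\}=\alpha_k(a)x_k+\delta_k(a)$ and expands in powers of $x_k$. So the hypothesis you must invoke at that point is that $B_k$ is a Poisson algebra, not that $p_{ji}\in B_i$. With that single correction your argument closes, but note that you will then essentially have re-proved the ``only if'' direction of \cite[1.1]{Oh8}, which the paper simply uses as a black box; the paper's route is strictly shorter. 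Your converse direction coincides with the paper's and is fine.
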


\begin{proof}
Suppose that $B_k$ is a Poisson algebra satisfying
the condition (\ref{MOC1}).
Define derivations $\alpha_k$, $\delta_k$ on $B_{k-1}$ by
$$\alpha_k=\sum_{i=1}^{k-1}c_{ki}\frac{\partial}{\partial x_i},\ \ \delta_k=\sum_{i=1}^{k-1}p_{ki}\frac{\partial}{\partial x_i}.$$
Then $\alpha_k$ is a Poisson derivation, $\delta_k$ is a derivation and the pair $(\alpha_k,\delta_k)$ satisfies (\ref{SKEW}) by \cite[1.1]{Oh8} since $B_k$ is a Poisson algebra.
Thus $B_k$ is a Poisson polynomial algebra
$$B_k=\Bbb C[x_1,\ldots,x_{k-1}][x_k;\alpha_{k},\delta_{k}]_p$$
over the Poisson subalgebra $B_{k-1}=\Bbb C[x_1,\ldots,x_{k-1}]$. The result follows by induction on $k$.

Conversely, if $B_k$ is an iterated Poisson polynomial algebra of the form (\ref{MOC2}) then $B_k$ is clearly a Poisson algebra satisfying
the condition (\ref{MOC1}).
\end{proof}

\begin{cor}\label{AD}
Let $B_k$ be an iterated Poisson polynomial $\Bbb C$-algebra
$$B_k=\Bbb C[x_1][x_2;\alpha_2,\delta_2]_p\ldots[x_k;\alpha_k,\delta_k]_p$$
such that
$$\begin{aligned}
\alpha_j(x_i)=c_{ji}x_i \ (c_{ji}\in\Bbb C),\ \ \ \delta_j(x_i)&\in\Bbb C[x_1, \ldots , x_i]\ \ \
\end{aligned}$$
for all $1\leq i<j\leq k$ and let
$$a_{ji}\in \Bbb F,\ \ u_{ji}\in \Bbb F[x_1, \ldots, x_i]$$
such that
\begin{equation}\label{MDER}
\begin{aligned}
&a_{ji}-1\in(t-1)\Bbb F,&  \frac{da_{ji}}{dt}|_{t=1}&=c_{ji},\\
&u_{ji}\in (t-1)\Bbb F[x_1, \ldots, x_i],&  \frac{d u_{ji}}{d t}|_{t=1}&=[\delta_j(x_i)],
\end{aligned}
\end{equation}
where $[\delta_j(x_i)]$ is the $\Bbb C$-linear combination of $\delta_j(x_i)$ by standard monomials of $x_1, \ldots, x_i$.
Set  $A_1=\Bbb F[x_1]$. Suppose that $\Bbb F/(t-1)\Bbb F$ is isomorphic to $\Bbb C$ and that $t-1$ is a nonunit and non-zero-divisor of an iterated skew polynomial $\Bbb F$-algebra
$$A_{k-1}=\Bbb F[x_1][x_2;\beta_2,\nu_2]\ldots[x_{k-1};\beta_{k-1},\nu_{k-1}]$$
 such that all $\beta_j,\nu_j$ satisfy (\ref{hom}) and (\ref{der0}).
 If $\Bbb F$-linear maps $\beta_k,\nu_k$ on $A_{k-1}$ subject to (\ref{AU}) and (\ref{?}) satisfy (\ref{ENDO1}) and (\ref{ENDO2}) then
   there exists an iterated skew polynomial $\Bbb F$-algebra
 $$A_k=A_{k-1}[x_k;\beta_k,\nu_k]=\Bbb F[x_1][x_2;\beta_2,\nu_2]\ldots[x_{k-1};\beta_{k-1},\nu_{k-1}][x_k;\beta_k,\nu_k]$$
  and $t-1$ is a regular element of $A_k$
 such that $B_k$ is Poisson isomorphic to the semiclassical limit $A_k/(t-1)A_k$.
\end{cor}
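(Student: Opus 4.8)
The plan is to read the conclusion off from Theorem~\ref{MAIN} and Theorem~\ref{LIMIT}, after checking that the hypotheses assumed in the corollary are exactly what those two theorems require, and then to identify the Poisson polynomial algebra produced by Theorem~\ref{LIMIT} with $B_k$. First I would apply Theorem~\ref{MAIN}: its hypotheses are precisely what is assumed here, namely that the maps $\beta_j,\nu_j$ with $j\le k-1$ satisfy (\ref{hom}) and (\ref{der0}) (built into the standing hypothesis on $A_{k-1}$), that the $\Bbb F$-linear maps $\beta_k,\nu_k$ satisfy (\ref{AU}) and (\ref{?}), and that they satisfy (\ref{ENDO1}) and (\ref{ENDO2}). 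Hence the iterated skew polynomial $\Bbb F$-algebra $A_k=A_{k-1}[x_k;\beta_k,\nu_k]$ exists.

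Next I would verify the hypotheses of Theorem~\ref{LIMIT} for $A_k$. The isomorphism $\Bbb F/(t-1)\Bbb F\cong\Bbb C$ is assumed. That $t-1$ is a nonunit and non-zero-divisor in $A_k$ is inherited from $A_{k-1}$: since $t-1\in\Bbb F$ is central in $A_k$ and $A_k$ is free as a left $A_{k-1}$-module with basis $\{x_k^n : n\ge 0\}$, multiplication by $t-1$ acts coefficientwise on $\sum_n f_n x_k^n$, so $(t-1)g=0$ forces each $(t-1)f_n=0$, hence $g=0$; and $(t-1)g=1$ forces $f_n=0$ for $n\ge 1$ and $(t-1)f_0=1$, contradicting that $t-1$ is a nonunit in $A_{k-1}$. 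Finally, condition (\ref{CON1}) holds for all $1\le i<j\le k$: $a_{ji}-1\in(t-1)\Bbb F$ by the first line of (\ref{MDER}) (for $j=k$ the relevant $a_{ki}$ is the one in (\ref{AU})), while $\nu_j(x_i)=u_{ji}\in(t-1)\Bbb F[x_1,\ldots,x_i]\subseteq(t-1)A_k$ by the second line of (\ref{MDER}) together with (\ref{der0}) and (\ref{?}). Thus Theorem~\ref{LIMIT} applies: $t-1$ is a regular element of $A_k$, and $\overline{A}_k=A_k/(t-1)A_k$ is Poisson isomorphic to an iterated Poisson polynomial algebra $\Bbb C[x_1][x_2;\alpha_2',\delta_2']_p\cdots[x_k;\alpha_k',\delta_k']_p$ with structure maps given by (\ref{REL1}).

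It remains to see that this algebra is $B_k$. By (\ref{MDER}), $\frac{da_{ji}}{dt}|_{t=1}=c_{ji}$, so $\alpha_j'(x_i)=c_{ji}x_i=\alpha_j(x_i)$; and, since $\nu_j(x_i)=u_{ji}$, again by (\ref{MDER}), $\delta_j'(x_i)=\frac{d\nu_j(x_i)}{dt}|_{t=1}=\frac{du_{ji}}{dt}|_{t=1}=[\delta_j(x_i)]$, which is $\delta_j(x_i)=p_{ji}$ written through standard monomials and hence represents the same element of $\Bbb C[x_1,\ldots,x_i]$. So the iterated Poisson polynomial algebra produced by Theorem~\ref{LIMIT} has the same structure maps on the generators as the presentation (\ref{MOC2}) of $B_k$ from the preceding lemma, and therefore coincides with $B_k$; composing with the Poisson isomorphism of Theorem~\ref{LIMIT} yields the desired Poisson isomorphism $B_k\cong A_k/(t-1)A_k$.

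I expect the only delicate points to be bookkeeping ones rather than conceptual: confirming that $t-1$ survives the final skew extension as a nonunit and non-zero-divisor (the coefficientwise argument above), and checking that the formal derivative $\frac{d\nu_j(x_i)}{dt}|_{t=1}$ is correctly matched with $\delta_j(x_i)$ after passing to the commutative quotient $\overline{A}_k$ under the identification $\overline{x}_\ell\leftrightarrow x_\ell$. Everything else is a direct appeal to Theorems~\ref{MAIN} and~\ref{LIMIT}.
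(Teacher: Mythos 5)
Your proposal is correct and follows the same route as the paper: apply Theorem~\ref{MAIN} to obtain $A_k$, then Theorem~\ref{LIMIT} to identify the semiclassical limit with $B_k$ via (\ref{MDER}). The paper's own proof is just a two-sentence appeal to those theorems; you merely fill in details it leaves implicit (the coefficientwise argument that $t-1$ stays a nonunit and non-zero-divisor in $A_k$, and the matching of the structure maps with the presentation (\ref{MOC2}) of $B_k$), which is a faithful elaboration rather than a different argument.
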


\begin{proof}
By Theorem~\ref{MAIN}, there exists a skew polynomial $\Bbb F$-algebra $A_k=A_{k-1}[x_k;\beta_k,\nu_k]$. Since $t-1$ is still a nonunit and non-zero-divisor in $A_k$, it is a regular element of $A_k$ and the semiclassical limit $A_k/(t-1)A_k$ is Poisson isomorphic to $B_k$ by Theorem~\ref{LIMIT}.
\end{proof}

\section{Examples}
In this section, we give examples which illustrate that $A_k$ is an iterated skew polynomial $\Bbb F$-algebra such that $A_k/(t-1)A_k$ is Poisson isomorphic to a given Poisson algebra $B_k$.
The first four examples appearing in \cite{LuWaZh} are Poisson Hopf algebras presented by iterated Poisson polynomial algebras.
We are interested only in their Poisson structures because we have not found a formal way to give Hopf structures in their deformations yet.

\begin{example}\label{PHA1}
In \cite[Example 3.2]{LuWaZh}, $B=\Bbb C[x_1,x_2,x_3]$ is a Poisson algebra with the Poisson bracket
$$\{x_2,x_1\}=0,\,\,\{x_3,x_1\}=\lambda_{11}x_1,\,\,\{x_3,x_2\}=\lambda_{21}x_1+\lambda_{22}x_2,$$
where $\lambda_{\ell m}\in\Bbb C$.
Observe that $B$ is a Poisson polynomial $\Bbb C$-algebra
$$B=\Bbb C[x_1,x_2][x_3;\delta_3]_p,$$
where
$$\delta_3(x_1)=\lambda_{11}x_1,\,\,\delta_3(x_2)=\lambda_{21}x_1+\lambda_{22}x_2.$$

Set $\Bbb F=\Bbb C[t]$ and
\begin{equation}\label{S004}
a_{31}=a_{32}=1,\,\,u_{31}=f_{11}\lambda_{11}x_1\in\Bbb F[x_1],\,\,u_{32}=f_{21}\lambda_{21}x_1+f_{22}\lambda_{22}x_2\in\Bbb F[x_1,x_2],
\end{equation}
where $f_{\ell m}\in(t-1)\Bbb F$ with $\frac{df_{\ell m}}{dt}|_{t=1}=1$, for example, $f_{\ell m}=(t-1)t^{N_{\ell m}}$ for some nonnegative integer $N_{\ell m}$.
By Remark~\ref{REM}(3), the $\Bbb F$-linear maps $\beta_3$ and $\nu_3$ on $\Bbb F[x_1,x_2]$ defined by
$$\beta_3(x_i)=x_i,\,\,\nu_3(x_i)=u_{3i}\,\,(i=1,2)$$
satisfy (\ref{ENDO1}) and (\ref{ENDO2}). Hence, by Theorem~\ref{MAIN}, there exists a skew polynomial $\Bbb F$-algebra
$$A=\Bbb F[x_1,x_2][x_3;\nu_3].$$
Moreover $t-1$ is a regular element of $A$ and thus $B$ is Poisson isomorphic to the semiclassical limit $A/(t-1)A$ of $A$ by Corollary~\ref{AD}
since all $a_{ji}$, $u_{ji}$ satisfy (\ref{MDER}).
\end{example}

\begin{example}
In \cite[Example 3.3]{LuWaZh}, $B=\Bbb C[x_1,x_2,x_3,x_4]$ is a Poisson algebra with the Poisson bracket
$$\begin{aligned}
\{x_2,x_1\}&=\{x_3,x_1\}=\{x_3,x_2\}=0,\\
\{x_4,x_1\}&=\lambda_{11}x_1,\\
\{x_4,x_2\}&=\lambda_{21}x_1+\lambda_{22}x_2,\\
\{x_4,x_3\}&=\lambda_{31}x_1+\lambda_{32}x_2+(\lambda_{11}+\lambda_{22})x_3,
\end{aligned}$$
where $\lambda_{\ell m}\in\Bbb C$.
Observe that $B$ is a Poisson polynomial $\Bbb C$-algebra
$$B=\Bbb C[x_1,x_2,x_3][x_4;\delta_4]_p,$$
where
$$\delta_4(x_1)=\lambda_{11}x_1,\,\,\delta_4(x_2)=\lambda_{21}x_1+\lambda_{22}x_2,\,\,\delta_4(x_3)=\lambda_{31}x_1+\lambda_{32}x_2+(\lambda_{11}+\lambda_{22})x_3.$$

Set $\Bbb F=\Bbb C[t]$ and
\begin{equation}\label{S005}
\begin{aligned}
&a_{41}=a_{42}=a_{43}=1,\\
&u_{41}=f_{11}\lambda_{11}x_1\in\Bbb F[x_1],\\
&u_{42}=f_{21}\lambda_{21}x_1+f_{22}\lambda_{22}x_2\in\Bbb F[x_1,x_2],\\
&u_{43}=f_{31}\lambda_{31}x_1+f_{32}\lambda_{32}x_2+(f_{11}\lambda_{11}+f_{22}\lambda_{22})x_3\in\Bbb F[x_1,x_2,x_3],
\end{aligned}
\end{equation}
where $f_{\ell m}\in(t-1)\Bbb F$ with $\frac{df_{\ell m}}{dt}|_{t=1}=1$.
By Remark~\ref{REM}(3), the $\Bbb F$-linear maps $\beta_4$ and $\nu_4$ on $\Bbb F[x_1,x_2,x_3]$ subject to
$$\beta_4(x_i)=x_i,\,\,\nu_4(x_i)=u_{4i}\,\,(i=1,2,3)$$
satisfy (\ref{ENDO1}) and (\ref{ENDO2}).
Hence, by Theorem~\ref{MAIN}, there exists a skew polynomial $\Bbb F$-algebra
$$A=\Bbb F[x_1,x_2,x_3][x_4;\nu_4].$$
Moreover $t-1$ is a regular element of $A$ and thus $B$ is Poisson isomorphic to the semiclassical limit $A/(t-1)A$ of $A$ by Corollary~\ref{AD}
since all $a_{ji}$, $u_{ji}$ satisfy (\ref{MDER}).
\end{example}

\begin{example}
In \cite[Example 3.4]{LuWaZh}, $C=\Bbb C[g^{\pm1},x]$ is a Poisson algebra with the Poisson bracket
$$\{x,g\}=\lambda gx,$$
where $\lambda\in\Bbb Z$.
Let $D=\Bbb C[g,h,x]$. Replacing $g^{-1}$ in $C$ by $h$ in $D$, $D$ is a Poisson algebra with the Poisson bracket
$$\{g,h\}=0,\,\,\{x,g\}=\lambda gx,\,\,\{x,h\}=-\lambda hx,$$
namely $D=\Bbb C[g,h][x;\alpha]_p$ is a Poisson algebra by \cite[1.1]{Oh8},
where $\alpha=\lambda g\frac{\partial}{\partial g}-\lambda h\frac{\partial}{\partial h}$ in $\Bbb C[g,h]$.
Note that the ideal $(gh-1)D$ is a Poisson ideal such that $D/(gh-1)D$ is Poisson isomorphic to $C$.

Set $\Bbb F=\Bbb C[t, t^{-1}]$ and $a=t^{\lambda}$.
By Remark~\ref{REM}(2) and Theorem~\ref{MAIN},
there exists a skew polynomial $\Bbb F$-algebra $A=\Bbb F[g,h][x;\beta]$ such that $gh-1$ is a central element in $A$, where
$$\beta(g)=ag,\,\,\beta(h)=a^{-1}h.$$
Set $B=A/(gh-1)A$ and note that $t-1$ is a regular element of $A$ and $B$.
The semiclassical limit $A/(t-1)A$ is Poisson isomorphic to $D$ by Corollary~\ref{AD}
since
$$a-1\in(t-1)\Bbb F,\,\,\frac{da}{dt}|_{t=1}=\lambda,\,\,a^{-1}-1\in(t-1)\Bbb F,\,\,\frac{da^{-1}}{dt}|_{t=1}=-\lambda$$
and the semiclassical limit $B/(t-1)B$ is Poisson isomorphic to $C$.
\end{example}

\begin{example}
In \cite[Example 3.7]{LuWaZh}, $C=\Bbb C[E,F,K^{\pm1}]$ is a Poisson algebra with the Poisson bracket
$$\begin{aligned}
\{E,K\}&=-2KE,\\
\{F,K\}&=2KF,\\
\{F,E\}&=\frac{1}{2}(K^{-1}-K).
\end{aligned}$$

Set $D=\Bbb C[E,F,H,K]$. Replacing $K^{-1}$ in $C$ by $H$ in $D$, it is observed that $D$ is a Poisson algebra with Poisson bracket
$$\begin{aligned}
\{H,K\}&=0,   &\{E,H\}&=2HE,\\
\{E,K\}&=-2KE,&\{F,H\}&=-2HF,\\
\{F,K\}&=2KF, &\{F,E\}&=\frac{1}{2}(H-K)
\end{aligned}$$
and that the ideal $(HK-1)D$ is a Poisson ideal such that $D/(HK-1)D$ is Poisson isomorphic to $C$.
In fact, $D$ is an iterated Poisson polynomial $\Bbb C$-algebra
$$D=\Bbb C[H, K][E;\alpha_3]_p[F;\alpha_4,\delta_4]_p,$$
where
$$\begin{aligned}
\alpha_3(H)&=2H, &\alpha_3(K)&=-2K,&&\\
\alpha_4(H)&=-2H,&\alpha_4(K)&=2K,&\alpha_4(E)&=0,\\
\delta_4(H)&=0,  &\delta_4(K)&=0,&\delta_4(E)&=\frac{1}{2}(H-K).
\end{aligned}$$

Set $\Bbb F=\Bbb C[t,t^{-1}]$ and $s=\sum_{i\ge0}(1-t)^i\in\Bbb C[[t-1]]$.
Since $ts=s-(1-t)s=1$ in $\Bbb C[[t-1]]$, we have that $t^{-1}=s$ and thus $\Bbb C[t]\subset\Bbb F\subset\Bbb C[[t-1]]$.
Set
\begin{equation}\label{S006}
\begin{aligned}
a_{31}&=t^2,&a_{32}&=t^{-2},&a_{41}&=t^{-2},&a_{42}&=t^2,&a_{43}&=1,\\
u_{31}&=0,  &u_{32}&=0,     &u_{41}&=0,     &u_{42}&=0,  &u_{43}&=\frac{1}{4}(t-t^{-1})(H-K).
\end{aligned}
\end{equation}
Then there exists a skew polynomial $\Bbb F$-algebra $\Bbb F[H,K][E;\beta_3]$ by Remark~\ref{REM}(2) and, applying Theorem~\ref{MAIN},
there exists an iterated skew polynomial $\Bbb F$-algebra
$$A=\Bbb F[H,K][E;\beta_3][F;\beta_4,\nu_4],$$
where
$$\begin{aligned}
\beta_3(H)&=t^2H,&\beta_3(K)&=t^{-2}K,&&&&\\
\beta_4(H)&=t^{-2}H,&\beta_4(K)&=t^2K,&\beta_4(E)&=E, &&\\
\nu_4(H)&=0,&\nu_4(K)&=0,&\nu_4(E)&=\frac{1}{4}(t-t^{-1})(H-K).
\end{aligned}$$
Moreover the element $HK-1$ is a central element of $A$ and $t-1$ is a regular element of $A$ and $B=A/(HK-1)A$.
Note that the semiclassical limit $A/(t-1)A$ is Poisson isomorphic to $D$ by Corollary~\ref{AD}
since all $a_{ji}$, $u_{ji}$ satisfy (\ref{MDER}).
Observe that the semiclassical limit $B/(t-1)B$ is Poisson isomorphic to $C$.

Let $0,\pm1\neq q\in\Bbb C$.
Then $t-q$ is a nonzero and nonunit in $A$ and $B$.
The deformation $B_q=B/(t-q)B$ is a nontrivial $\Bbb C$-algebra with the multiplication induced by that of $B$,
which is isomorphic to $U_q(\frak{sl}_2(\Bbb C))$ in \cite[I.3.1]{BrGo} as shown in \cite[4.5]{Jor7}.
\end{example}

\begin{prop}\label{exact}
Fix $h\in B_3=\Bbb C[x_1,x_2,x_3]$ with degree $\leq3$.
By \cite[Proposition 1.17]{JoOh}, $B_3$ becomes a Poisson algebra with Poisson bracket
\begin{equation}\label{MO1}
\{f,g\}=\text{det}\begin{pmatrix}\frac{\partial h}{\partial x_1} &\frac{\partial h}{\partial x_2}&\frac{\partial h}{\partial x_3}\\
\frac{\partial f}{\partial x_1}&\frac{\partial f}{\partial x_2}&\frac{\partial f}{\partial x_3}\\ \frac{\partial g}{\partial x_1}&\frac{\partial g}{\partial x_2}&\frac{\partial g}{\partial x_3}\end{pmatrix}
\end{equation}
for $f,g\in B_3$.
Suppose that the Poisson bracket of $B_3$ satisfies the condition (\ref{MOC1}).
Then $h$ is of the form
$$h=\lambda x_1x_2x_3+\mu x_3+f_1x_2+f_2,$$
where $\lambda,\mu\in\Bbb C$ and $f_1,f_2\in\Bbb C[x_1]$ such that $\deg f_1\leq2$ and $\deg f_2\leq3$.
\end{prop}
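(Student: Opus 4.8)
The plan is to compute the three brackets $\{x_2,x_1\}$, $\{x_3,x_1\}$, $\{x_3,x_2\}$ directly from the determinant formula (\ref{MO1}) and match each against the required form $c_{ji}x_ix_j+p_{ji}$ with $p_{ji}\in B_i$. Writing $h_i=\partial h/\partial x_i$, the formula gives, for $f=x_j$ and $g=x_i$ with $i<j$, that $\{x_j,x_i\}$ equals the $2\times2$ minor of $(h_1,h_2,h_3)$ obtained by deleting the column not indexed by $i$ or $j$, up to sign. Explicitly: $\{x_2,x_1\}=h_3$, $\{x_3,x_1\}=-h_2$, and $\{x_3,x_2\}=h_1$. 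So the hypothesis (\ref{MOC1}) becomes three constraints on the partial derivatives of $h$, and the task is purely to integrate them.

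First I would use $\{x_2,x_1\}=h_3\in B_1=\Bbb C[x_1]$: this says $\partial h/\partial x_3$ depends only on $x_1$, hence $\partial^2 h/\partial x_3^2$ depends only on $x_1$; but $h$ has degree $\le 3$, so $\partial h/\partial x_3$ has degree $\le 2$, and being a polynomial in $x_1$ alone of degree $\le 2$ it is some $g_0(x_1)$ with $\deg g_0\le 2$. Writing $h = g_0(x_1)x_3 + (\text{terms not involving }x_3)$ is not quite right since $g_0$ could have degree $2$ while $h$ has degree $3$; more carefully, $h = \int h_3\,dx_3 + r(x_1,x_2)$ with $r$ the ``constant of integration''. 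Now impose $\{x_3,x_1\}=-h_2\in B_1=\Bbb C[x_1]$, which forces $\partial h/\partial x_2$ to be a polynomial in $x_1$ only; differentiating the expression for $h$ shows $\partial g_0/\partial x_2=0$ (already true) and $\partial r/\partial x_2\in\Bbb C[x_1]$, so $r$ is linear in $x_2$: $r = f_1(x_1)x_2 + f_2(x_1)$ with $\deg f_1,\deg f_2 \le 3$, and in fact $\deg f_1\le 2$ from the degree bound on $h_2=f_1$.

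Next I would impose the third condition $\{x_3,x_2\}=h_1=c_{32}x_2x_3+p_{32}$ with $p_{32}\in B_2=\Bbb C[x_1,x_2]$. Since $h_1 = \partial h/\partial x_1$, and from the previous step $h = G(x_1,x_3) + f_1(x_1)x_2 + f_2(x_1)$ where $G = \int h_3\,dx_3$, we get $h_1 = \partial G/\partial x_1 + f_1'(x_1)x_2 + f_2'(x_1)$. For this to contain the term $c_{32}x_2x_3$, the piece $\partial G/\partial x_1$ must supply an $x_2x_3$ term — impossible since $G$ does not involve $x_2$ — unless we revisit the form of $G$. The resolution is that $h_3 = g_0(x_1)$ was too restrictive only if I forgot that $h_3$ could involve a constant times nothing else; in fact $G(x_1,x_3) = g_0(x_1)x_3$ exactly (no $x_3^2$ term, since $h_3\in\Bbb C[x_1]$ kills $x_3^2$-dependence, and $x_3^2$ in $h$ would give $h_3$ a linear-in-$x_3$ part). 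So $h = g_0(x_1)x_3 + f_1(x_1)x_2 + f_2(x_1)$, giving $h_1 = g_0'(x_1)x_3 + f_1'(x_1)x_2 + f_2'(x_1)$. Matching with $c_{32}x_2x_3 + p_{32}$: the monomial $x_2x_3$ has coefficient $0$ on the left, so $c_{32}=0$; then $g_0'(x_1)x_3 \in B_2$ forces $g_0'(x_1)$ to be a constant, i.e. $g_0(x_1) = \lambda x_1 + \mu$ for scalars $\lambda,\mu$ — wait, but the claimed form has $\lambda x_1 x_2 x_3$, a cubic term, which contradicts $h = g_0(x_1)x_3+\cdots$ with $g_0$ linear. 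The main obstacle, and the step needing the most care, is exactly this reconciliation: I must re-examine whether $\{x_2,x_1\}=h_3\in\Bbb C[x_1]$ truly forbids $x_1x_2$ inside $h_3$. It does not — $h_3\in B_1$ was my misreading; the condition is $\{x_2,x_1\}=c_{21}x_1x_2+p_{21}$ with $p_{21}\in B_1$, so $h_3 = c_{21}x_1x_2+p_{21}(x_1)$, allowing an $x_1x_2$ term. Redoing the integration with this correction: $h_3 = c_{21}x_1x_2 + p_{21}(x_1)$ integrates to $h = c_{21}x_1x_2x_3 + p_{21}(x_1)x_3 + r(x_1,x_2)$; then $-h_2 = c_{32}'$-type condition and $h_1 = $ the third condition are imposed as above. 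Carrying this through, the cubic term $c_{21}x_1x_2x_3$ survives (this is $\lambda x_1x_2x_3$), the $x_3$-coefficient beyond that must be a pure polynomial in $x_1$ of degree $\le 1$ forced down to degree $0$ by the $B_2$-constraint from $h_1$ (giving $\mu x_3$), $r$ is linear in $x_2$ with coefficients $f_1(x_1)$ (forced $\deg\le 2$ by the degree bound and the $h_2\in B_1$-type constraint) and $f_2(x_1)$ ($\deg\le 3$). Assembling these yields precisely $h = \lambda x_1x_2x_3 + \mu x_3 + f_1x_2 + f_2$ as claimed.
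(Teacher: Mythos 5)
Your final, corrected argument is essentially the paper's proof: read off $\partial h/\partial x_3$, $\partial h/\partial x_2$, $\partial h/\partial x_1$ from the three brackets, integrate the first to get $h=(c x_1x_2+p_{21}(x_1))x_3+r(x_1,x_2)$ up to sign, use the second to force $r=f_1(x_1)x_2+f_2(x_1)$, and use the third to force $p_{21}'=0$ (the $p_{21}'x_3$ term cannot be absorbed into $c_{32}x_2x_3+p_{32}$ with $p_{32}\in\Bbb C[x_1,x_2]$). The only blemishes are cosmetic: your stated signs are all flipped (the determinant gives $\{x_2,x_1\}=-h_3$, $\{x_3,x_1\}=h_2$, $\{x_3,x_2\}=-h_1$), which is harmless since the $c_{ji}$, $p_{ji}$ are arbitrary, and the long detour through the misreading ``$\{x_2,x_1\}\in B_1$'' should simply be deleted from a final write-up.
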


\begin{proof}
Note that the Poisson bracket of $B_3$ is as follows:
\begin{eqnarray}
\{x_1,x_2\}&=&\frac{\partial h}{\partial x_3}\label{PPP1}\\
\{x_1,x_3\}&=&-\frac{\partial h}{\partial x_2}\label{PPP2}\\
\{x_2,x_3\}&=&\frac{\partial h}{\partial x_1}.\label{PPP3}
\end{eqnarray}
By (\ref{PPP1}) and (\ref{MOC1}), we have that $\frac{\partial h}{\partial x_3}=\{x_1,x_2\}=-c_{21}x_1x_2-p_{21}$ and thus
\begin{equation}\label{PPP4}
h=-(c_{21}x_1x_2+p_{21})x_3+f,
\end{equation}
where $c_{21}\in\Bbb C$, $p_{21}\in\Bbb C[x_1]$ with degree $\leq2$ and $f\in\Bbb C[x_1,x_2]$ with degree $\leq3$.
By (\ref{PPP2}), (\ref{PPP4}) and (\ref{MOC1}), we have
$$c_{31}x_1x_3+p_{31}=\{x_3,x_1\}=-c_{21}x_1x_3+\frac{\partial f}{\partial x_2}$$
and thus $\frac{\partial f}{\partial x_2}=p_{31}\in\Bbb C[x_1]$.
It follows that $f=p_{31}x_2+f_2$ and thus
\begin{equation}\label{PPP5}
h=-(c_{21}x_1x_2+p_{21})x_3+p_{31}x_2+f_2
\end{equation}
by (\ref{PPP4}), where $f_2\in\Bbb C[x_1]$ such that $\deg p_{31}\leq 2$ and $\deg f_2\leq 3$.
By (\ref{PPP3}) and (\ref{PPP5}), we have that
$$-c_{32}x_2x_3-p_{32}=\{x_2,x_3\}=-c_{21}x_2x_3-p'_{21}x_3+p_{31}'x_2+f_2',$$
where $p_{21}'=\frac{\partial p_{21}}{\partial x_1},p_{31}'=\frac{\partial p_{31}}{\partial x_1},f_2'=\frac{\partial f_2}{\partial x_1}$,
and thus $p_{21}\in\Bbb C$.
Hence $h$ is of the form
$$h=\lambda x_1x_2x_3+\mu x_3+f_1x_2+f_2$$
for some $\lambda,\mu\in\Bbb C$ and $f_1,f_2\in\Bbb C[x_1]$ with $\text{deg}f_1\leq2$ and $\text{deg}f_2\leq3$, as claimed.
\end{proof}

\begin{example}\label{MMO}
Retain the notations of Proposition~\ref{exact}.
Suppose that $\text{deg} f_1=0$, namely $f_1\in\Bbb C$.
By (\ref{MO1}), $B_3$ is a Poisson algebra with the Poisson bracket
$$\{x_2,x_1\}=-\lambda x_1x_2-\mu,\,\{x_3,x_1\}=\lambda x_1x_3+f_1,\,\{x_3,x_2\}=-\lambda x_2x_3-\frac{\partial f_2}{\partial x_1}.$$
Hence $B_3$ is an iterated Poisson polynomial $\Bbb C$-algebra
$$B_3=\Bbb C[x_1][x_2;\alpha_2,\delta_2]_p[x_3;\alpha_3,\delta_3]_p$$
by \cite[1.1]{Oh8}, where
$$\begin{aligned}
&\alpha_2(x_1)=-\lambda x_1, &&\alpha_3(x_1)=\lambda x_1,  &&\alpha_3(x_2)=-\lambda x_2,\\
&\delta_2(x_1)=-\mu,         &&\delta_3(x_1)=f_1,          &&\delta_3(x_2)=-\frac{\partial f_2}{\partial x_1}.
\end{aligned}$$

Let $\Bbb F=\Bbb C[[t-1]]$ and let $U(\Bbb F)$ be the unit group of $\Bbb F$.
Note that $t-1$ is a nonzero, nonunit and non-zero-divisor of $\Bbb F$.
Fix $\widetilde{\lambda}\in U(\Bbb F)$, $\widetilde{\mu},\widetilde{f_1}\in\Bbb F$, $\widetilde{g}\in\Bbb F[x_1]$ such that
\begin{equation}\label{SS00}
\begin{aligned}
&\widetilde{\lambda}-1\in(t-1)\Bbb F,&&\widetilde{\mu},\ \widetilde{f_1}\in(t-1)\Bbb F,&&\widetilde{g}\in(t-1)\Bbb F[x_1],\\
&\frac{d\widetilde{\lambda}}{dt}|_{t=1}=\lambda,&&\frac{d\widetilde{\mu}}{dt}|_{t=1}=\mu,\ \frac{d\widetilde{f_1}}{dt}|_{t=1}
=f_1,&&\frac{d\widetilde{g}}{dt}|_{t=1}=\frac{\partial f_2}{\partial x_1}.
\end{aligned}
\end{equation}
(Such ones exist. For example, $\widetilde{\lambda}=e^{\lambda(t-1)}$, $\widetilde{\mu}=(t-1)\mu$, $\widetilde{f_1}=(t-1)f_1$,  $\widetilde{g}=(t-1)\frac{\partial f_2}{\partial x_1}$.)
Set
\begin{equation}\label{S01}
a_{21}=\widetilde{\lambda}^{-1},\,u_{21}=-\widetilde{\mu}.
\end{equation}
The $\Bbb F$-linear maps $\beta_2$ and $\nu_2$ on $\Bbb F[x_1]$ defined by
$$\beta_2(x_1)=a_{21}x_1=\widetilde{\lambda}^{-1}x_1,\,\,\nu_2(x_1)=u_{21}=-\widetilde{\mu}$$
satisfy (\ref{ENDO1}) and (\ref{ENDO2}) trivially.
Hence there exists a skew polynomial $\Bbb F$-algebra $A_2=\Bbb F[x_1][x_2;\beta_2,\nu_2]$ by Theorem~\ref{MAIN}.

Set
\begin{equation}\label{S02}
\begin{array}{ll}
a_{31}=\widetilde{\lambda},&a_{32}=\widetilde{\lambda}^{-1},\\
u_{31}=\widetilde{f_1},    &u_{32}=-\widetilde{g}.
\end{array}
\end{equation}
Since $u_{21},u_{31}\in\Bbb F$, $u_{32}\in\Bbb F[x_1]$ and $a_{31}^{-1}=a_{21}=a_{32}$,
the $\Bbb F$-linear maps $\beta_3$ and $\nu_3$ on $A_2$ subject to
$$\begin{array}{ll}
\beta_3(x_1)=a_{31}x_1=\widetilde{\lambda}x_1,&\beta_3(x_2)=a_{32}x_2=\widetilde{\lambda}^{-1}x_2,\\
\nu_3(x_1)=u_{31}=\widetilde{f_1},            &\nu_3(x_2)=u_{32}=-\widetilde{g}
\end{array}$$
satisfy (\ref{ENDO1}) and (\ref{ENDO2}).
Hence, by Theorem~\ref{MAIN}, there exists a skew polynomial $\Bbb F$-algebra
$$A_3=A_2[x_3;\beta_3,\nu_3]=\Bbb F[x_1][x_2;\beta_2,\nu_2][x_3;\beta_3,\nu_3].$$
Note that $t-1$ is a regular element in $A_3$.
Thus the semiclassical limit $A_3/(t-1)A_3$ is Poisson isomorphic to $B_3$ by Corollary~\ref{AD}
since all $a_{ji}$, $u_{ji}$ satisfy (\ref{MDER}) by (\ref{SS00}).

For every $1\neq q\in \Bbb C$, $t-q$ is a unit in $\Bbb F=\Bbb C[[t-1]]$ and thus $A_3/(t-q)A_3$ is trivial.
Hence, in order to find nontrivial deformations, we need a suitable subalgebra $A_3'$ of $A_3$ such that deformations $A_3'/(t-q)A_3'$ are nontrivial, as one sees below.

As a special case, let $\Bbb F=\Bbb C[t,t^{-1}]$ and $\lambda=-2,\,\,\mu=2,\,\,f_1=2,\,\,f_2=2x_1$.
Then
$$h=-2x_1x_2x_3+2x_3+2x_2+2x_1$$
and $B_3$ is a Poisson $\Bbb C$-algebra with the Poisson bracket
$$\{x_2,x_1\}=2x_1x_2-2,\,\,\{x_3,x_1\}=-2x_1x_3+2,\,\,\{x_3,x_2\}=2x_2x_3-2.$$
Setting
$$\widetilde{\lambda}=t^{-2},\,\,\widetilde{\mu}=t^{2}-1,\,\,\widetilde{f_1}=-(t^{-2}-1),\,\,\widetilde{g}=t^{2}-1,$$
there is an $\Bbb F$-algebra $A_3=\Bbb F[x_1][x_2;\beta_2,\nu_2][x_3;\beta_3,\nu_3]$ such that
$$\begin{aligned}
&\beta_2(x_1)=a_{21}x_1=t^2x_1,&&\beta_3(x_1)=a_{31}x_1=t^{-2}x_1,&&\beta_3(x_2)=a_{32}x_2=t^2x_2,\\
&\nu_2(x_1)=u_{21}=-(t^{2}-1), &&\nu_3(x_1)=u_{31}=-(t^{-2}-1),   &&\nu_3(x_2)=u_{32}=-(t^{2}-1).
\end{aligned}$$
Note that $A_3$ is the $\Bbb F$-algebra generated by $x_1, x_2, x_3$ subject to the relations
\begin{equation}\label{S3}
t^2x_1x_2-x_2x_1=t^2-1,\,\,t^2x_3x_1-x_1x_3=t^2-1,\,\,t^2x_2x_3-x_3x_2=t^2-1.
\end{equation}
Let $0,1\neq q\in\Bbb C$ and let $A_3^q$ be the deformation $A_3^q=A_3/(t-q)A_3$ of $B_3$.
Then $A_3^q$ is the $\Bbb C$-algebra generated by $x_1, x_2, x_3$ subject to the relations obtained from (\ref{S3}) by replacing $t$ by $q$.
Observe that the set $\{x_3^i|i=0,1,\ldots\}$ is an Ore set of $A_3^q$ by the second and the third equations of (\ref{S3}).
The localization $A_3^q[x_3^{-1}]$ of $A^q_3$ at $\{x_3^i|i=0,1,\ldots\}$ is isomorphic to $U_q(\frak{sl}_2)$ by Ito, Terwilliger and Weng \cite{ItTeWe}, which is $Y_q$ in \cite[4.5]{Jor7}.
\end{example}

\begin{example}\label{PWA}
As in \cite[2.2]{Good4}, we find a quantization and deformations of a well-known Poisson algebra
$B_k=\Bbb C[x_1,x_2,\ldots, x_{2k-1},x_{2k}]$ with Poisson bracket
$$\{f,g\}=\sum_{i=1}^k\left(-\frac{\partial f}{\partial x_{2i-1}}\frac{\partial g}{\partial x_{2i}}+\frac{\partial g}{\partial x_{2i-1}}\frac{\partial f}{\partial x_{2i}}\right),$$
which is called Poisson Weyl algebra in \cite[1.1.A]{ChPr} and \cite[1.3]{MeHaO}.
Since $B_k$ is a Poisson algebra with Poisson bracket
$$\{x_j,x_i\}=\left\{\begin{aligned} &1,&&\text{if }j=2\ell,i=2\ell-1,\\ &0,&&\text{otherwise}\end{aligned}\right.$$
for $j>i$, $B_k$ is an iterated Poisson polynomial algebra
$$B_k=\Bbb C[x_1][x_2;\delta_2]_p\ldots[x_{2k-1}]_p[x_{2k};\delta_{2k}]_p,$$
where
$$\delta_{2\ell}(x_i)=\left\{\begin{aligned} &1,& &\text{if }i=2\ell-1,\\ &0,&&\text{if }i\neq 2\ell-1.\end{aligned}\right.$$

Set $\Bbb F=\Bbb C[t]$ and let
\begin{equation}\label{WEYL2}
a_{ji}=1,\,\,u_{ji}=\left\{\begin{aligned}&t-1, &&\text{if }j=2\ell,i=2\ell-1,\\ &0,&&\text{otherwise}\end{aligned}\right.
\end{equation}
for all $1\leq i<j\leq2k$.
By Theorem~\ref{MAIN}, there exists an iterated skew polynomial $\Bbb F$-algebra
$$A_k=\Bbb F[x_1][x_2;\nu_2]\ldots[x_{2k-1}][x_{2k};\nu_{2k}],$$
where
$$\nu_{2\ell}(x_i)=\left\{\begin{aligned}&t-1,&&\text{if }i=2\ell-1,\\ &0,&&\text{if }i\neq 2\ell-1.\end{aligned}\right.$$
Thus $A_k$ is an $\Bbb F$-algebra generated by $x_1,x_2,\ldots, x_{2k-1},x_{2k}$ subject to the relations
\begin{equation}\label{YR1}
x_jx_i-x_ix_j=\left\{\begin{aligned}&t-1&&\text{if }j=2\ell,i=2\ell-1\\&0&&\text{otherwise},\end{aligned}\right.
\end{equation}
which is the algebra appearing in \cite[Proposition 3.2]{MyOh}.
For each $0\neq \lambda\in \Bbb C$, a deformation $A_\lambda=A_k/(t-1-\lambda)A_k$ is a $\Bbb C$-algebra generated by
$x_1,x_2,\ldots, x_{2k-1},x_{2k}$ subject to the relations
$$x_jx_i-x_ix_j=\left\{\begin{aligned}&\lambda,&&\text{if }j=2\ell,i=2\ell-1,\\
                                      &0,      &&\text{otherwise.}\end{aligned}\right.$$
Hence we get a family of infinite nontrivial deformations $\{A_\lambda|0\neq \lambda\in\Bbb C\}$, all of which are isomorphic to the $k$-th Weyl algebra by \cite[Proposition 3.4]{MyOh}.

Note that $t-1$ is a regular element of $A_k$.
By Corollary~\ref{AD}, the semiclassical limit $A_k/(t-1)A_k$ is Poisson isomorphic to $B_k$
since
$$a_{ji}-1\in(t-1)\Bbb F,\,\,\frac{da_{ji}}{dt}|_{t=1}=0,\,\,u_{ji}\in(t-1)A_i,\,\,\frac{du_{ji}}{dt}|_{t=1}=[\delta_j(x_i)].$$

\end{example}

\begin{example}\label{PWA2}
Let $B_k$ be the Poisson Weyl algebra given in Example~\ref{PWA}.
Set $\Bbb F=\Bbb C[[t-1]]$
and
\begin{equation}\label{WEYL}
a_{ji}=\left\{\begin{aligned} &\cos(t-1),&&\text{if $i+j$ is odd},\\  &\sec(t-1),&&\text{if $i+j$ is even,}\end{aligned}\right.\ \ \
u_{ji}=\left\{\begin{aligned}&\sin(t-1), &&\text{if }j=2\ell,i=2\ell-1,\\ &0,&&\text{otherwise}\end{aligned}\right.
\end{equation}
for all $1\leq i<j\leq 2k$.
Note that $a_{ji}, u_{ji}\in \Bbb F$ by elementary calculus.

We will show by induction on $k$ that there exists an iterated skew polynomial $\Bbb F$-algebra
$$A_k=\Bbb F[x_1][x_2;\beta_2,\nu_2]\ldots[x_{2k-1};\beta_{2k-1}][x_{2k};\beta_{2k},\nu_{2k}],$$
where
$$\beta_j(x_i)=a_{ji}x_i,\,\,\nu_j(x_i)=u_{ji}$$
for all $1\leq i<j\leq 2k$.
If $k=1$ then there exists the skew polynomial $\Bbb F$-algebra $A_1=\Bbb F[x_1][x_2;\beta_2,\nu_2]$ trivially by Theorem~\ref{MAIN}.
Suppose that $k>1$ and assume that there exists an iterated skew polynomial $\Bbb F$-algebra $A_{k-1}$.
Note that, for any positive integers $i,j,\ell$,
\begin{equation}\label{EOEO}
\begin{aligned}
&\text{$i+j$ is odd if and only if}\\
&\qquad\qquad\quad\text{($\ell+j$ is odd and $\ell+i$ is even) or ($\ell+j$ is even and $\ell+i$ is odd)}.
\end{aligned}\end{equation}
Observe that $\Bbb F$-linear maps $\beta_{2k-1}$ and $\nu_{2k-1}$ satisfy (\ref{ENDO2}) trivially
since $\nu_{2k-1}(u_{ji})=0$ and $u_{2k-1,i}=0$ for all $1\leq i<2k-1$
and that they also satisfy (\ref{ENDO1}) by (\ref{EOEO})
since $\beta_{2k-1}(u_{ji})=u_{ji}$.
Hence there exists a skew polynomial $\Bbb F$-algebra $A_{k-1}[x_{2k-1};\beta_{2k-1}]$ by Theorem~\ref{MAIN}.
For $\Bbb F$-linear maps $\beta_{2k}$ and $\nu_{2k}$, they satisfy (\ref{ENDO1}) and (\ref{ENDO2}) by (\ref{EOEO})
since $\beta_{2k}(u_{ji})=u_{ji}$ and $\nu_{2k}(u_{ji})=0$
and thus there exists $A_k=A_{k-1}[x_{2k-1};\beta_{2k-1}][x_{2k};\beta_{2k},\nu_{2k}]$ by Theorem~\ref{MAIN}.

Note that $t-1$ is a regular element of $A_k$.
By Corollary~\ref{AD}, the semiclassical limit $A_k/(t-1)A_k$ is Poisson isomorphic to  $B_k$
since
$$a_{ji}-1\in(t-1)\Bbb F,\,\,\frac{da_{ji}}{dt}|_{t=1}=0,\,\,u_{ji}\in(t-1)A_i,\,\,\frac{du_{ji}}{dt}|_{t=1}=[\delta_j(x_i)]$$
by elementary calculus.

Note that $A_k$ is an $\Bbb F$-algebra generated by $x_1,x_2,\ldots, x_{2k-1}, x_{2k}$ subject to the relations
\begin{equation}\label{MODD}
\begin{aligned}
x_{2\ell}x_{2\ell -1}-\cos(t-1)x_{2\ell -1}x_{2\ell}&=\sin(t-1), &&(\ell=1,\ldots,k),\\
x_jx_i-\sec(t-1)x_ix_j&=0, &&(\text{$i<j$, \ $i+j$ is even}),\\
x_jx_i-\cos(t-1)x_ix_j&=0, &&\left({\begin{aligned}&\text{$i<j$,\  $i+j$ is odd, }\\
&\text{if $j=2\ell$ then $i\neq 2\ell-1$}\end{aligned}}\right).
\end{aligned}\end{equation}
For any $0\neq q\in\Bbb C$, $t-1-q$ is a unit in $A_k$ and thus $A_k/(t-1-q)A_k$ is trivial.
It follows that we need an appropriate subalgebra of $A_k$ to find a nontrivial deformation.
For instance, let $A_k'$ be the $\Bbb C[t]$-subalgebra of $A_k$ generated by $x_1, x_2,\ldots, x_{2k-1}, x_{2k}$.
Evaluating $A_k'$ to $\pi$ at $t-1$, we have a deformation $A_k^{\pi}$ which is the $\Bbb C$-algebra generated by $x_1, x_2,\ldots, x_{2k-1}, x_{2k}$
subject to the relations
$$x_jx_i+x_ix_j=0\,\,(j>i)$$
by (\ref{MODD}).
In this case the evaluation map $\varphi$ from $A_k'$ onto $A_k^\pi$ defined by $f\mapsto f|_{t-1=\pi}$ is a $\Bbb C$-algebra epimorphism and thus $A_k'/\ker \varphi\cong A_k^\pi$.
\end{example}

\begin{example}
The commutative $\Bbb C$-algebra $B=\Bbb C[x_1,\ldots,x_n]$ is a Poisson $\Bbb C$-algebra with Poisson bracket
$$\{x_j,x_i\}=x_ix_j$$
for all $1\leq i<j\leq n$ by \cite[Example 4.5]{Good3}.
Note that $B$ is an iterated Poisson polynomial $\Bbb C$-algebra
$$B=\Bbb C[x_1][x_2;\alpha_2]_p\ldots[x_n;\alpha_n]_p,$$
where $\alpha_j(x_i)=x_i$ for all $1\leq i<j\leq n$.

Set $\Bbb F=\Bbb C[t]$ and $a_{ki}=t$ for $1\leq i<k\leq n$. Then, by Remark~\ref{REM}(2) and Theorem~\ref{MAIN}, there exists an iterated skew polynomial $\Bbb F$-algebra
$$A=\Bbb F[x_1][x_2;\beta_2]\ldots[x_n;\beta_n],$$
where $\beta_k(x_i)=a_{ki}x_i$ for all $1\leq i<k\leq n$.
Note that $t-1$ is a regular element of $A$.
By Corollary~\ref{AD}, $A/(t-1)A$ is Poisson isomorphic to $B$
since
$$a_{ki}-1\in(t-1)\Bbb F,\,\,\frac{d a_{ki}}{dt}|_{t=1}=1.$$

Let $0,1\neq q\in\Bbb C$.
The deformation $A_q=A/(t-q)A$ of $B$ is the $\Bbb C$-algebra generated by $x_1,\ldots, x_n$ subject to the relations
$$x_jx_i=qx_ix_j$$
for all $1\leq i<j\leq n$, which is the coordinate ring $\mathcal{O}_{q}(\Bbb C^n)$ of quantum affine $n$-space in \cite[I.2.1]{BrGo}.
\end{example}

\begin{example}
A Poisson $2\times 2$-matrices algebra is the coordinate ring of $2\times2$-matrices, $\mathcal{O}(M_2(\Bbb C))=\Bbb C[x, y, z, w]$, with Poisson bracket
$$\begin{aligned}
&\{x,y\}=xy, &&\{x,z\}=xz, &&\{x,w\}=2yz,\\
&\{y,z\}=0,  &&\{y,w\}=yw, &&\{z,w\}=zw
\end{aligned}$$
by \cite[Example 4.9]{Good3}.
Note that $\mathcal{O}(M_2(\Bbb C))$ is an iterated Poisson polynomial $\Bbb C$-algebra
$$\mathcal{O}(M_2(\Bbb C))=\Bbb C[y][z][x;\alpha_3]_p[w;\alpha_4,\delta_4]_p,$$
where
$$\begin{aligned}
&\alpha_3(y)=y, &&\alpha_3(z)=z, &&\\
&\alpha_4(y)=-y,&&\alpha_4(z)=-z,&&\alpha_4(x)=0,\\
&\delta_4(y)=0, &&\delta_4(z)=0, &&\delta_4(x)=-2yz.
\end{aligned}$$

Set $\Bbb F=\Bbb C[t, t^{-1}]$ and
\begin{equation}\label{EX31}
\begin{aligned}
&a_{31}=t,          &&a_{32}=t,          &&\\
&u_{31}=0,          &&u_{32}=0,          &&\\
&a_{41}=a_{31}^{-1},&&a_{42}=a_{32}^{-1},&&a_{43}=1,  \\
&u_{41}=0,          &&u_{42}=0,          &&u_{43}=-(t-t^{-1})yz.
\end{aligned}
\end{equation}
We show that there exists an iterated skew polynomial $\Bbb F$-algebra
$$A=\Bbb F[y,z][x;\beta_3][w;\beta_4,\nu_4],$$
where
\begin{equation}\label{EX32}
\begin{aligned}
&\beta_3(y)=a_{31}y,     &&\beta_3(z)=a_{32}z,     &&\\
&\beta_4(y)=a_{31}^{-1}y,&&\beta_4(z)=a_{32}^{-1}z,&&\beta_4(x)=a_{43}x,\\
&\nu_4(y)=0,             &&\nu_4(z)=0,             &&\nu_4(x)=u_{43}.
\end{aligned}
\end{equation}
By Remark~\ref{REM}(2) and Theorem~\ref{MAIN}, there exists a skew polynomial $\Bbb F$-algebra $\Bbb F[y,z][x;\beta_3]$.
Note that $\Bbb F[y,z]$ is commutative and $u_{43}\in\Bbb F[y,z]$, $a_{42}a_{32}=a_{41}a_{31}=1$.
Hence $\Bbb F$-linear maps $\beta_4$ and $\nu_4$ satisfy (\ref{ENDO1}) and (\ref{ENDO2})
and thus there exists an iterated skew polynomial $\Bbb F$-algebra $A$ by Theorem~\ref{MAIN}.
Note that $t-1$ is a regular element of $A$.
Hence the semiclassical limit $A/(t-1)A$ is Poisson isomorphic to $\mathcal{O}(M_2(\Bbb C))$ by Corollary~\ref{AD}
since all $a_{ji}$, $u_{ji}$ satisfy (\ref{MDER}).

The deformation
$$A_q=A/(t-q)A,\,\,(0,1\neq q\in\Bbb C)$$
with multiplication induced by that of $A$ is the $\Bbb C$-algebra generated by $x,y,z,w$ subject to the relations
$$\begin{aligned}
zy&=yz, &xy&=qyx,&xz&=qzx,\\
yw&=qwy,&zw&=qwz,&xw-wx&=(q-q^{-1})yz.
\end{aligned}$$
Following \cite[I.1.7]{BrGo}, $A_q$ is the quantum $2\times2$-matrices algebra $\mathcal{O}_{q}(M_2(\Bbb C))$ as expected.
\end{example}

\noindent
{\bf Acknowledgments} The authors would like to appreciate a referee for indicating valuable comments and mistakes.
The second author is supported by National Research Foundation of Korea, NRF-2017R1A2B4008388.


\bibliographystyle{amsplain}

\providecommand{\bysame}{\leavevmode\hbox to3em{\hrulefill}\thinspace}
\providecommand{\MR}{\relax\ifhmode\unskip\space\fi MR }
\providecommand{\MRhref}[2]{%
  \href{http://www.ams.org/mathscinet-getitem?mr=#1}{#2}
}
\providecommand{\href}[2]{#2}

\end{document}